\IfFileExists{iopjournal.cls}{\documentclass{iopjournal}}{\documentclass{iopjournal-draft}}

\usepackage[T1]{fontenc}
\usepackage{graphicx}
\usepackage{xcolor}
\usepackage{hyperref}
\usepackage[utf8]{inputenc}
\usepackage{lmodern}
\usepackage{amsmath,amssymb,amsthm,mathtools}
\usepackage{bm}
\usepackage{microtype}
\usepackage[numbers,sort&compress]{natbib}
\usepackage{xurl}
\usepackage{booktabs}
\usepackage{enumitem}
\usepackage{listings}
\usepackage{placeins}

\newtheorem{theorem}{Theorem}[section]
\newtheorem{proposition}[theorem]{Proposition}
\newtheorem{lemma}[theorem]{Lemma}
\newtheorem{corollary}[theorem]{Corollary}
\theoremstyle{definition}

\theoremstyle{remark}
\newtheorem{remark}[theorem]{Remark}

\newcommand{\D}{\mathbb D}
\newcommand{\dd}{\,\mathrm d}
\newcommand{\sech}{\operatorname{sech}}
\newcommand{\csch}{\operatorname{csch}}
\newcommand{\Dom}{\operatorname{Dom}}

\newcommand{\spec}{\operatorname{spec}}
\newcommand{\norm}[1]{\lVert #1\rVert}
\newcommand{\ip}[2]{\langle #1,#2\rangle}

\newcommand{\R}{\mathbb R}
\newcommand{\N}{\mathbb N}
\newcommand{\Z}{\mathbb Z}

\numberwithin{equation}{section}

\hypersetup{
  pdftitle={Explicit diagonalization of the linearized Calder\'on problem on the disk: multiplicity normalization and exponential spectral attenuation},
  pdfauthor={William R. B. Lionheart},
  colorlinks=true,
  linkcolor=blue!45!black,
  citecolor=blue!45!black,
  urlcolor=blue!45!black
}

\begin{document}

\articletype{Paper}

\title{Explicit diagonalization of the linearized Calder\'on problem on the disk: multiplicity normalization and exponential spectral attenuation}

\author{William R B Lionheart$^{1,*}$}

\affil{$^1$Department of Mathematics, The University of Manchester, Manchester M13 9PL, United Kingdom}
\affil{$^*$Author to whom correspondence should be addressed.}
\email{bill.lionheart@manchester.ac.uk}

\keywords{Calder\'on problem, electrical impedance tomography, diagonalization,  Berezin transform, hyperbolic Laplacian, Zernike lattice, generalized singular functions}

\begin{abstract}
We study the full angular dependence of the linearized Schr\"odinger form of the Calder\'on problem on the unit disk.  We obtain an explicit continuous spectral diagonalization of its normal operator and hence a generalized singular system for the linearized forward map.  To our knowledge, this is the first such diagonalization for the full angularly dependent, multiplicity-normalized linearized measurement map on the disk.

The decisive step is to collapse the exact redundancy classes in the boundary Fourier matrix and to assign equal weight to each algebraically independent moment.  The resulting data are indexed by the Zernike lattice, and their normal operator is
\[
 (Kf)(z)=\frac1\pi\int_{\D}\frac{f(w)}{|1-z\overline w|^2}\,\dd A(w).
\]
A weighted Schur estimate gives the exact norm $\|K\|=\pi$, and a weakly null sequence of boundary annuli proves non-compactness before spectral theory is invoked.  We identify a second-order differential expression on the disk satisfying a direct global kernel commutator identity.  Multiplication by $(1-|z|^2)/2$ conjugates its canonical realization to the Poincar\'e Laplacian and conjugates $K$ to hyperbolic convolution with $(4\pi)^{-1}\sech^2(d_H/2)$.  This is the renormalized $s=1$ endpoint of the weighted disk Berezin family.  Classical hyperbolic harmonic analysis then yields
\[
 K=\pi\sech\!\left(\pi\sqrt{-\mathsf L-\tfrac14}\right),
\]
where $\mathsf L$ is the Euclidean-space realization unitarily equivalent to the hyperbolic Laplacian.  Hence $K$ has purely absolutely continuous spectrum $[0,\pi]$, and its generalized singular attenuation is exponential in hyperbolic frequency.  We give the associated Legendre and hypergeometric modes and a brief finite-section numerical check.  The abstract Berezin diagonalization belongs to established harmonic analysis; the new inverse-problem result is the derivation of this endpoint operator from the multiplicity-normalized linearized Calder\'on measurements.
\end{abstract}

\section{Introduction}

Calder\'on introduced the inverse boundary problem that now bears his name in the conductivity setting \cite{Calderon1980}, including calculation of the Fr\'echet derivative and a proof of its injectivity.  This is  important for medical, industrial and geophysical imaging, where the technique goes by various names including Electrical Impedance Tomography (EIT) and Electrical Resistivity Tomography  \cite{adler2015electrical}.

The singular value decomposition (SVD) of discretized EIT Jacobians has been used to study ill-posedness and non-uniform resolution since the mid-1980s.  Murai and Kagawa \cite{murai1985electrical} computed a numerical SVD for a resistor-mesh/finite-element model.  Breckon and Pidcock \cite{breckon1988some} used singular values from a finite-element model to assess the relative importance of drive-pair separation, and Connolly and Wall \cite{connolly1988inverse} published numerical singular values for a rotationally symmetric problem.  Numerical singular functions for the disk were presented by the author at the Third European Workshop on EIT in Copenhagen in 1990 and in the author's doctoral thesis, submitted under the name W. R. Breckon \cite{breckon1990image}.

These computations already showed severe ill-posedness, strongly non-uniform resolution, and weak sensitivity to interior perturbations.  Nevertheless, an explicit analytic singular system for the full angular problem has remained unavailable for roughly four decades.  The contrast with the classical Zernike SVD of the Radon transform on the disk made this absence particularly conspicuous.  A key change of viewpoint came from recognizing that the continuum normal operator is not compact (Proposition~\ref{prop:noncompact}); the appropriate analogue of an SVD is therefore a continuous spectral diagonalization rather than a discrete singular sequence.

The radial subproblem---that is, the $m=0$ angular Fourier block corresponding to rotationally invariant perturbations---is particularly transparent.  For rotationally symmetric perturbations, a multiplicity-normalized normal operator reduces to the Hilbert kernel \cite{Lionheart2026AtLast}.
\begin{equation}
 (Hf)(t)=\int_0^1\frac{f(s)}{1-st}\,\dd s.
 \label{eq:radial-hilbert}
\end{equation}
Its continuous spectral multiplier may be obtained from an adjacent-interval finite Hilbert transform, from a commuting Sturm--Liouville expression, from Hilbert-matrix theory, or from a P\"oschl--Teller scattering problem; see, for example, \cite{KatsevichTovbis2016,AlemanMontesSarafoleanu2012,MontesVirtanen2025}.  The finite-Hilbert-transform route is especially informative about non-compactness: the intervals share an endpoint.  Introducing a gap---which in the present problem corresponds to knowing the perturbation in a neighbourhood of the boundary---makes the radial operator compact.  Compactness is also restored for the frequency-weighted Neumann-to-Dirichlet formulation, but an explicit singular system is not presently known in either variant.

The purpose of this paper is to solve the full angular problem for a symmetry-adapted, equal-weight non-redundant Hilbert realization of the derivative at the zero potential.  The result is not merely a collection of separated radial formulas: all angular blocks assemble into one hyperbolically invariant operator on the disk.  To our knowledge, this gives the first explicit diagonalization of the full angularly dependent, multiplicity-normalized linearized Calder\'on measurement map in this setting and resolves an analytic question that has been investigated numerically since the earliest EIT SVD studies.

The key bookkeeping observation is that a matrix element indexed by boundary Fourier frequencies $(k,l)$ depends on the two integers
\begin{equation}
 m=k-l,\qquad n=|k|+|l|.
 \label{eq:intro-invariants}
\end{equation}
The admissible pairs satisfy
\[
 n\geq |m|,\qquad n\equiv m\pmod 2,
\]
which is exactly the Zernike index lattice. Compare for example the diagonalization of the longitudinal tensor ray transform on the disk using Zernike disk functions \cite{KazantsevBukhgeim2004}.  Counting every independent datum once, rather than with its raw matrix multiplicity, gives the moment map whose normal kernel is
\begin{equation}
 \kappa(z,w)=\frac1{\pi|1-z\overline w|^2}.
 \label{eq:intro-kernel}
\end{equation}
This kernel is not the Bergman projection kernel.  After a unitary change of measure it is instead the the squared modulus of the inner product between normalized Hardy reproducing kernels  and is a limiting Berezin convolution.  Berezin transforms and their expression as functions of invariant differential operators have a substantial literature \cite{Berezin1975,UnterbergerUpmeier1994,Englis1995,Stroethoff1997}.  Our contribution is the bridge from linearized Calder\'on measurements to the Hardy-endpoint kernel, together with its inverse-problem interpretation.
The multiplicity normalization is the step that reveals this invariant operator.  It is not hidden as an allegedly equivalent form of the usual operator norm: it defines the particular tangent-data Hilbert space being diagonalized.  Section~\ref{sec:nonredundant} compares it explicitly with the raw entrywise matrix norm and with physically noise-weighted data.  In particular, counting each independent datum once cannot increase data energy or the ordered singular values relative to counting every repeated copy in raw $\ell^2$; it removes that repetition gain.

The main results may be summarized as follows.
\begin{enumerate}[label=(\roman*)]
\item The Schr\"odinger Dirichlet-to-Neumann map is Fr\'echet differentiable at the zero potential for perturbations in $L^2(\D)$, and its derivative is the usual Alessandrini bilinear form.
\item The multiplicity-reduced Fourier data define a bounded moment map $\mathcal{M}$ with 
\[
 K=\mathcal{M}^*\mathcal{M},
 \]
and kernel \eqref{eq:intro-kernel}.
\item The operator is positive and self-adjoint, has exact norm $\pi$, and is non-compact.
\item A global second-order differential expression $\mathcal P$ satisfies
\[
 \mathcal P_z\kappa(z,w)=\mathcal P_w\kappa(z,w),
\]
not merely after separation into angular modes.
\item Under a unitary multiplication operator $U$, the auxiliary expression $\mathcal L=\mathcal P-\frac12\partial_\theta^2-1$ becomes the Poincar\'e Laplacian and $UKU^{-1}$ becomes a radial hyperbolic convolution.
\item The exact normal-operator multiplier is
\[
\lambda(\mu)=\frac{\pi}{\cosh(\pi\mu)},
\]
so that the spectrum is continuous and high hyperbolic frequencies
are exponentially attenuated.
\item The generalized eigenfunctions of $K$, and hence the generalized
right singular functions of $\mathcal M$, are given explicitly by
associated Legendre or hypergeometric functions.  Their Liouville
normal form is a P\"oschl--Teller scattering equation, which explains
the concentration of fine radial oscillations near the boundary.

\end{enumerate}

The distinction between multiplicity normalization and physical frequency weighting is important.  The former assigns one unit of weight to each algebraically independent tangent datum.  A Sobolev, covariance-whitening, Dirichlet-to-Neumann, or Neumann-to-Dirichlet weighting may introduce additional factors depending on $k$ and $l$ and therefore changes the normal operator.  The paper treats the equal-weight, non-redundant tangent data space exactly; it does not claim that this is the unique physical noise norm for an electrode experiment.

\section{Linearization in an $L^2$ potential}
\label{sec:linearization}

Let $\D\subset\R^2$ be the unit disk and consider
\begin{equation}
 (-\Delta+q)u=0\quad\hbox{in }\D,
 \qquad u|_{\partial\D}=f.
 \label{eq:schrodinger-pde}
\end{equation}
For $q$ close to zero in $L^2(\D)$, write $\Lambda_q:H^{1/2}(\partial\D)\to H^{-1/2}(\partial\D)$ for the Dirichlet-to-Neumann map, with the outward normal convention. 
We work initially in this Schr\"odinger formulation, and hence the terminology describing $q$ as a potential.  Its relation to
the conductivity equation and to the linearized conductivity
Dirichlet-to-Neumann map is recorded after Proposition~\ref{prop:linearization}.

For a positive conductivity $\gamma$, the substitution
$u=\gamma^{1/2}v$ transforms
\begin{equation} \label{eq:conductivity}
\nabla\cdot(\gamma\nabla v)=0
\end{equation}
into
\begin{equation}\label{eq:gammaq}
(-\Delta+q_\gamma)u=0,
\qquad
q_\gamma=\gamma^{-1/2}\Delta\gamma^{1/2}.
\end{equation}
If $\gamma=1$ in a neighbourhood of $\partial\D$, the corresponding
boundary maps agree under this transformation.  For general boundary
values of $\gamma$ there is also the standard boundary conjugation and
zeroth-order correction.

We use the standard Sobolev scale on the boundary circle.  Using the Fourier series
\[
\varphi_k(\theta)=(2\pi)^{-1/2}e^{ik\theta},
\qquad
f=\sum_{k\in\mathbb Z}\widehat f_k\varphi_k,
\]
we set
\[
\norm{f}_{H^s(\partial\D)}^2
=
\sum_{k\in\mathbb Z}(1+k^2)^s|\widehat f_k|^2.
\]
Thus $H^{-1/2}(\partial\D)$ is the dual of
$H^{1/2}(\partial\D)$ with respect to the $L^2$ pairing.
Any equivalent Sobolev norms would give the same operator spaces.
For the interior space we use
\[
\norm{u}_{H^1(\D)}^2
=
\norm{u}_{L^2(\D)}^2+\norm{\nabla u}_{L^2(\D)}^2.
\] 

\begin{proposition}[Linearization at the zero potential]
\label{prop:linearization}
There is a neighbourhood of zero in $L^2(\D)$ on which \eqref{eq:schrodinger-pde} is uniquely solvable and $q\mapsto\Lambda_q$ is analytic as a map
\[
 L^2(\D)\longrightarrow
 \mathcal L\bigl(H^{1/2}(\partial\D),H^{-1/2}(\partial\D)\bigr).
\]
In particular,
\begin{equation}
 \ip{D\Lambda_0[\eta]f}{g}
 =\int_{\D}\eta\,u_f^0\,\overline{u_g^0}\,\dd A,
 \label{eq:linearized-DtN}
\end{equation}
where $u_f^0$ and $u_g^0$ are harmonic extensions.  Moreover,
\begin{equation}
 \norm{D\Lambda_0[\eta]}_{H^{1/2}\to H^{-1/2}}
 \leq C\norm{\eta}_{L^2},
 \label{eq:derivative-bound}
\end{equation}
and
\begin{equation}
 \norm{\Lambda_q-\Lambda_0-D\Lambda_0[q]}_{H^{1/2}\to H^{-1/2}}
 \leq C\norm{q}_{L^2}^2
 \label{eq:quadratic-remainder}
\end{equation}
for $\norm q_{L^2}$ sufficiently small.
\end{proposition}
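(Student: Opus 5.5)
The plan is to reduce everything to a perturbation argument around the Dirichlet Laplacian in $H^1_0(\D)$, using the two-dimensional Sobolev embedding $H^1(\D)\hookrightarrow L^4(\D)$. The key multiplier estimate is that for $q\in L^2$ and $u,v\in H^1$,
\[
\Bigl|\int_{\D} q\,u\,\overline v\,\dd A\Bigr|\le \norm q_{L^2}\norm u_{L^4}\norm v_{L^4}\le C_S^2\norm q_{L^2}\norm u_{H^1}\norm v_{H^1}.
\]
Consequently multiplication by $q$ is bounded from $H^1(\D)$ to $H^{-1}(\D)$, with norm at most $C_S^2\norm q_{L^2}$, and it depends linearly (hence analytically) on $q$.

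For solvability, write $u=E f+w$, where $Ef=u_f^0$ is the harmonic extension and $w\in H^1_0$. Then $w$ satisfies
\[
(-\Delta+q)w=-q\,Ef .
\]
Since $-\Delta:H^1_0\to H^{-1}$ is an isomorphism with inverse $G$, this equation reads $(I+Gq)w=-G(qEf)$. For $\norm q_{L^2}<(C_S^2\norm G)^{-1}$, a Neumann series gives
\[
w=-\sum_{j\ge0}(-Gq)^jG(qEf),
\]
which converges in operator norm and is analytic in $q$. Uniqueness follows from the same invertibility.

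The Dirichlet-to-Neumann map is defined weakly. By Green's identity,
\[
\ip{\Lambda_q f}{g}=\int_{\D}\bigl(\nabla u_f\cdot\nabla\overline{\tilde g}+q\,u_f\overline{\tilde g}\bigr)\dd A,
\]
for any $H^1$ extension $\tilde g$ of $g$. Taking $\tilde g=Eg$, the term $\int\nabla w\cdot\nabla\overline{Eg}$ vanishes because $Eg$ is harmonic and $w\in H^1_0$. This leaves
\[
\ip{(\Lambda_q-\Lambda_0)f}{g}=\int_{\D} q\,(Ef+w)\,\overline{Eg}\,\dd A .
\]
Here $\ip{\Lambda_0f}{g}=\int\nabla Ef\cdot\nabla\overline{Eg}$. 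The right-hand side is an analytic function of $q$ valued in bounded sesquilinear forms on $H^{1/2}\times H^{1/2}$, because $\norm{Ef}_{H^1}\le C\norm f_{H^{1/2}}$ by the trace theorem. Bounded forms are identified with $\mathcal L(H^{1/2},H^{-1/2})$.

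The first-order term in $q$ is $\int q\,Ef\,\overline{Eg}$, which is \eqref{eq:linearized-DtN}. The multiplier estimate combined with the extension bound gives \eqref{eq:derivative-bound}.

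The remainder is $\int q\,w\,\overline{Eg}$. The Neumann series gives $\norm w_{H^1}\le C\norm q_{L^2}\norm f_{H^{1/2}}$. Applying the multiplier estimate once more yields a bound of $C\norm q_{L^2}^2\norm f_{H^{1/2}}\norm g_{H^{1/2}}$, which is \eqref{eq:quadratic-remainder}.

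One bookkeeping point concerns the pairing convention. I will take $\ip\cdot\cdot$ to be sesquilinear, matching the conjugate on $u_g^0$. With real $q$ the forms are consistent; complex $q$ causes no difficulty because the estimates never use self-adjointness.

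The main obstacle is not analytic depth but correctly invoking the $L^4$ embedding: an $L^2$ potential is not a bounded multiplier on $L^2$, so the argument must be made at the $H^1\to H^{-1}$ level throughout. I expect the rest to be routine.
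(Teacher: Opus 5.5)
Your proposal is correct and follows essentially the same route as the paper: the Hölder/$L^4$ multiplier estimate, a Neumann series for $(I+GM_q)^{-1}$ (your decomposition $u=Ef+w$ with $(I+GM_q)w=-G(qEf)$ is the same operator equation as the paper's $u_f^q=(I+GM_q)^{-1}u_f^0$), then Alessandrini's identity and the quadratic remainder bound. Two details of yours are slightly more careful than the paper. You derive Alessandrini's identity from the weak definition of $\Lambda_q$, where the paper only cites it. You also state the multiplier estimate for $v\in H^1$ rather than $v\in H^1_0$, which is the case actually needed when pairing against the harmonic extension $u_g^0$.
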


\begin{proof}
For $u\in H^1(\D)$ and $v\in H_0^1(\D)$, H\"older's inequality with exponents $2,4,4$, followed by the Sobolev embeddings $H^1(\D),H_0^1(\D)\hookrightarrow L^4(\D)$, gives
\begin{equation}
 \left|\int_{\D}q\,u\overline v\,\dd A\right|
 \leq \norm q_{L^2}\norm u_{L^4}\norm v_{L^4}
 \leq C\norm q_{L^2}\norm u_{H^1}\norm v_{H_0^1}.
 \label{eq:multiplication-estimate}
\end{equation}
Consequently, multiplication by $q$ defines a bounded map $M_q:H^1(\D)\to H^{-1}(\D)$ and
\[
 \norm{M_qu}_{H^{-1}}\leq C\norm q_{L^2}\norm u_{H^1}.
\]
Let $G=(-\Delta_D)^{-1}:H^{-1}(\D)\to H_0^1(\D)$.  If $u_f^0$ denotes harmonic extension, the solution is
\begin{equation}
 u_f^q=(I+GM_q)^{-1}u_f^0.
 \label{eq:operator-series}
\end{equation}
For small $\norm q_2$, the inverse is represented by a norm-convergent Neumann series, proving analyticity and
\[
 \norm{u_f^q-u_f^0}_{H^1}
 \leq C\norm q_2\norm f_{H^{1/2}}.
\]
The integration-by-parts formula commonly called Alessandrini's identity is
\begin{equation}
 \ip{(\Lambda_q-\Lambda_0)f}{g}
 =\int_{\D}q\,u_f^q\,\overline{u_g^0}\,\dd A.
 \label{eq:alessandrini}
\end{equation}
Subtraction of \eqref{eq:linearized-DtN}, followed by \eqref{eq:multiplication-estimate}, gives \eqref{eq:quadratic-remainder}.  The same estimate with harmonic extensions gives \eqref{eq:derivative-bound}.
\end{proof}
\begin{remark}[Relation with conductivity perturbations]
	For a positive conductivity $\gamma\in H^2(\D)$, the Liouville
	substitution $u=\gamma^{1/2}v$ transforms \eqref{eq:conductivity} into  \eqref{eq:gammaq}. 
	Since $H^2(\D)$ is a Banach algebra and embeds into 
	$C^0(\overline\D)$, the map
	\[
	\gamma\longmapsto q_\gamma
	\]
	is locally real analytic from the open set of strictly positive
	$H^2$ conductivities into $L^2(\D)$.  At the constant conductivity,
	\[
	Dq_1[h]=\frac12\Delta h.
	\]
	
	If $\gamma=1$ in a neighbourhood of $\partial\D$, the conductivity
	and Schr\"odinger Dirichlet-to-Neumann maps agree under the Liouville
	transformation.  Consequently,
	\[
	D\Lambda^{\rm c}_1[h]
	=
	D\Lambda^{\rm s}_0\left[\frac12\Delta h\right],
	\]
	and Proposition~\ref{prop:linearization} gives
	\[
	\norm{D\Lambda^{\rm c}_1[h]}_{H^{1/2}\to H^{-1/2}}
	\leq C\norm h_{H^2}.
	\]
	Without the boundary assumption, the two derivatives differ by
	explicit terms involving the boundary traces of $h$ and
	$\partial_\nu h$.  The diagonalization below is formulated in the
	potential variable $\eta$; in the conductivity variable the forward
	map is precomposed with $\frac12\Delta$.
\end{remark}
\begin{remark}
The operator-series argument gives every Fr\'echet derivative, but only the first derivative is needed below.  We use \eqref{eq:alessandrini} in the standard descriptive sense; it is, at heart, the Green identity for two solutions \cite{Alessandrini1988}.
\end{remark}

We now remain in the Schr\"odinger formulation and compute the
linearized measurements in the boundary Fourier basis.
Let
\begin{equation}
 \varphi_k(\theta)=(2\pi)^{-1/2}e^{ik\theta},\qquad k\in\Z.
 \label{eq:boundary-fourier}
\end{equation}
Its harmonic extension is $u_k^0(r,\theta)=r^{|k|}\varphi_k(\theta)$.  If
\[
 \eta_m(r)=\frac1{2\pi}\int_0^{2\pi}\eta(r,\theta)e^{-im\theta}\,\dd\theta,
\]
then \eqref{eq:linearized-DtN} yields
\begin{equation}
 \ip{D\Lambda_0[\eta]\varphi_k}{\varphi_l}
 =\int_0^1\eta_{l-k}(r)r^{|k|+|l|}r\,\dd r.
 \label{eq:fourier-matrix-element}
\end{equation}
Thus, up to the harmless sign convention for the object angular index, the datum depends only on $k-l$ and $|k|+|l|$.

\section{Non-redundant measurements and the Zernike lattice}
\label{sec:nonredundant}

Define
\begin{equation}
 m=k-l,\qquad n=|k|+|l|.
 \label{eq:mn}
\end{equation}
For fixed $(m,n)$ let
\[
 F_{m,n}=\{(k,l)\in\Z^2:k-l=m,\ |k|+|l|=n\}.
\]

\begin{proposition}[Fibre classification]
\label{prop:fibres}
The fibre is nonempty precisely when
\begin{equation}
 n\geq |m|,\qquad n\equiv m\pmod 2.
 \label{eq:zernike-lattice}
\end{equation}
For admissible $(m,n)$,
\begin{equation}
 \#F_{m,n}=:\nu(m,n)=
 \begin{cases}
 |m|+1,&n=|m|,\\
 2,&n>|m|.
 \end{cases}
 \label{eq:fibre-multiplicity}
\end{equation}
\end{proposition}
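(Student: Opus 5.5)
The plan is to parametrize the fibre by $k$ alone: on $F_{m,n}$ the relation $l=k-m$ is forced, so $F_{m,n}$ is in bijection with
\[
S_{m,n}=\{k\in\Z:\ |k|+|k-m|=n\}.
\]
Everything therefore reduces to studying the piecewise-linear function $\phi(k)=|k|+|k-m|$ on $\Z$. By the symmetry $(k,l)\mapsto(-l,-k)$, which sends $m\mapsto m$ and preserves $n$, or more simply by the reflection $k\mapsto m-k$ combined with $m\mapsto -m$ via $(k,l)\mapsto(-k,-l)$, I would reduce to the case $m\ge 0$.

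\textbf{Necessity.} The triangle inequality gives $\phi(k)\ge |k-(k-m)|=|m|$. For parity, note that $|k|\equiv k$ and $|l|\equiv l \pmod 2$, so
\[
n=|k|+|l|\equiv k+l\equiv k-l=m \pmod 2.
\]
This shows that nonemptiness forces \eqref{eq:zernike-lattice}.

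\textbf{Counting for $m\ge0$.} I would split $\Z$ into three regions.
\begin{enumerate}[label=(\alph*)]
\item On $0\le k\le m$, $\phi(k)=k+(m-k)=m$. So when $n=m$, every such $k$ lies in the fibre, which gives $m+1$ points, and no $k$ outside $[0,m]$ does.
\item For $k>m$, $\phi(k)=2k-m$, which is strictly increasing and exceeds $m$.
\item For $k<0$, $\phi(k)=m-2k$, which is strictly decreasing in $k$ and exceeds $m$.
\end{enumerate}
Hence for $n>m$ with $n\equiv m \pmod 2$, there is exactly one solution $k=(n+m)/2>m$ and exactly one solution $k=(m-n)/2<0$. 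Both are integers precisely because of the parity condition, which also gives sufficiency. This yields $\nu(m,n)=2$. The case $n=|m|$ gives $|m|+1$, including $m=0,n=0$, where the only point is $k=l=0$.

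There is no real obstacle. The only care needed is making the reduction to $m\ge0$ explicit: the map $(k,l)\mapsto(-k,-l)$ bijects $F_{m,n}$ onto $F_{-m,n}$. I would also check the degenerate case $m=0$, where regions (b) and (c) still give two distinct points $k=\pm n/2$ for $n>0$.
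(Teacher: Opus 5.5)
Your proof is correct, but it takes a different route from the paper.

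\textbf{The paper's route.} It works in the coordinates $(m,v)$ with $v=k+l$ and uses the identity $|k|+|l|=\max\{|k-l|,|k+l|\}$, so that $n=\max\{|m|,|v|\}$. The $\ell^1$ level set in $(k,l)$ thus becomes the boundary of a square in $(m,v)$. Fixing $m$, one reads off either the whole flat edge $v\in\{-n,-n+2,\dots,n\}$ when $n=|m|$, or the two points $v=\pm n$ when $n>|m|$. The parity constraint comes from integrality of $k=(m+v)/2$.

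\textbf{Your route.} You parametrize the fibre by $k$, reduce to $m\ge0$, and analyse the piecewise-linear function $|k|+|k-m|$ on three regions.

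\textbf{Comparison.} Both arguments contain the same sign case analysis, since the max identity is itself proved by splitting according to whether $k$ and $l$ have the same sign. The paper's packaging is symmetric in the sign of $m$, so it needs no reduction, and it makes the exceptional edge $n=|m|$ geometrically visible. Yours is more hands-on: it gets the parity necessity directly from $n\equiv k+l\equiv k-l$, and it exhibits the two fibre points explicitly.

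\textbf{A small slip.} The map $(k,l)\mapsto(-l,-k)$ that you mention first fixes $m$; it is the reflection $k\mapsto m-k$, which preserves each fibre. So it does not give the reduction to $m\ge0$. Only $(k,l)\mapsto(-k,-l)$, which you state correctly at the end, does that.
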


\begin{proof}
Put $v=k+l$.  The identity
\begin{equation}
 |k|+|l|=\max\{|k-l|,|k+l|\}
 \label{eq:max-identity}
\end{equation}
gives $n=\max\{|m|,|v|\}$.  Moreover,
\[
 k=\frac{m+v}{2},\qquad l=\frac{v-m}{2},
\]
so $v$ and $m$ have the same parity.  If $n>|m|$, then $v=\pm n$, giving two points.  If $n=|m|$, then $v=-n,-n+2,\ldots,n$, giving $n+1=|m|+1$ points.
\end{proof}

The sum-and-difference coordinates
\begin{equation}
 p=\frac{n+m}{2},\qquad q=\frac{n-m}{2}
 \label{eq:pq}
\end{equation}
give a bijection between the lattice \eqref{eq:zernike-lattice} and $\N_0^2$, with
\[
 n=p+q,\qquad m=p-q,
\]
and radial Zernike index
\begin{equation}
 j=\min\{p,q\}=\frac{n-|m|}{2}.
 \label{eq:radial-index}
\end{equation}
This is the familiar bidegree-to-polar-degree relation
\[
 z^p\overline z^{\,q}=r^{p+q}e^{i(p-q)\theta}.
\]

When a redundant data matrix is constant on each fibre, the norm
\begin{equation}
 \norm Y_\sharp^2=
 \sum_{k,l\in\Z}\frac{|Y_{kl}|^2}{\nu(k-l,|k|+|l|)}
 \label{eq:sharp-data-norm}
\end{equation}
counts every independent datum exactly once.  Equivalently, one may select one representative for each $(m,n)$ or $(p,q)$.

\subsection{The topology of the reduced data}
Let
\[
 \Gamma=\{(m,n)\in\Z\times\N_0:n\geq |m|,\ n\equiv m\pmod 2\},
 \qquad \pi(k,l)=(k-l,|k|+|l|).
\]
A consistent linearized Fourier matrix has the form $Y_{kl}=d_{\pi(k,l)}$ for a reduced sequence $d=(d_\gamma)_{\gamma\in\Gamma}$.  The ordinary entrywise matrix norm restricted to this consistency subspace is
\begin{equation}
 \norm{Y}_{\mathrm{raw}}^2
 =\sum_{\gamma\in\Gamma}\nu(\gamma)|d_\gamma|^2,
 \label{eq:raw-data-norm}
\end{equation}
whereas \eqref{eq:sharp-data-norm} gives
\begin{equation}
 \norm{Y}_{\sharp}^2
 =\sum_{\gamma\in\Gamma}|d_\gamma|^2.
 \label{eq:reduced-data-norm}
\end{equation}
Thus the reduced norm is not simply the topology induced by raw matrix $\ell^2$.  It is the equal-weight Hilbert norm on the algebraically independent tangent measurements.  Since $\nu\geq1$,
\[
 \norm{Y}_{\sharp}\leq \norm{Y}_{\mathrm{raw}}.
\]
On the truncation $|m|\leq M$ the converse estimate
\[
 \norm{Y}_{\mathrm{raw}}\leq\sqrt{M+1}\,\norm{Y}_{\sharp}
\]
holds, but there is no uniform equivalence as $M\to\infty$ because $\nu(m,|m|)=|m|+1$.

Equivalently, if $R$ replicates a reduced sequence across its fibres, then on every finite truncation the raw measurement map is $R\mathcal M$ and
\[
 (R\mathcal M)^*(R\mathcal M)=\mathcal M^*D_\nu\mathcal M\geq\mathcal M^*\mathcal M,
 \qquad D_\nu=\operatorname{diag}(\nu).
\]
By the min--max principle, on every finite truncation the ordered singular values satisfy
\[
 s_j(\mathcal M)\leq s_j(R\mathcal M).
\]
The severe exponential attenuation is already present in the radial block. In that block the zero moment occurs once, whereas every positive moment occurs twice, so the raw and equal-weight reduced norms are uniformly equivalent,
\[||d||_{\mbox{red}} \leq ||d||_{\mbox{raw}} \leq \sqrt{2}\,||d||_{\mbox{red}},\]
although they are not scalar multiples. The corresponding radial normal operators differ, after an overall factor, by the rank-one correction associated with the zero moment. Hence the exponential degree of ill-posedness is not produced by the unbounded multiplicities of the non-radial data.

The norm is also not claimed to be induced by the Banach operator norm on $\mathcal L(H^{1/2},H^{-1/2})$.  An SVD requires a Hilbert data metric, and the choice here is attached to the derivative at the rotationally invariant background $q=0$.  We do not assert that the nonlinear map takes values in this sequence space.  For physical electrode data the statistically appropriate metric depends on the noise covariance: independent repeated measurements retain a multiplicity gain, while exact or perfectly correlated algebraic copies should be counted only once.

Define the non-redundant moment map initially on a dense subspace by
\begin{equation}
 \mathcal{M} f_{p,q}
 =\frac1{\sqrt\pi}\int_{\D}
 f(w)\overline w^{\,p}w^q\,\dd A(w),
 \qquad p,q\in\N_0.
 \label{eq:M}
\end{equation}
In $(m,j)$ coordinates this is
\begin{equation}
 (\mathcal{M}  f)_{m,j}
 =\frac1{\sqrt\pi}\int_{\D}
 f(re^{i\theta})r^{|m|+2j}e^{-im\theta}\,\dd A,
 \qquad m\in\Z,\quad j\in\N_0.
 \label{eq:sigma-mj}
\end{equation}
The normalization in \eqref{eq:M} is chosen to make the later kernel and norm formulas simple.

\section{The normal operator}
\label{sec:normal}

\begin{proposition}[Normal-operator factorization]
\label{prop:normal-factorization}
The map $\mathcal{M} $ extends to a bounded operator
\[
 \norm{\mathcal M}=\sqrt{\pi}:L^2(\D,\dd A)\longrightarrow\ell^2(\N_0^2),
\]
and
\begin{equation}
 K=\mathcal{M} ^*\mathcal{M},
 \qquad
 (Kf)(z)=\frac1\pi\int_{\D}
 \frac{f(w)}{|1-z\overline w|^2}\,\dd A(w).
 \label{eq:K-definition}
\end{equation}
In angular frequency $m$, the normal kernel is
\begin{equation}
 k_m(r,\rho)=\frac{(r\rho)^{|m|}}{1-r^2\rho^2}.
 \label{eq:angular-block-r}
\end{equation}
\end{proposition}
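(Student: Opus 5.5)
We compute $\mathcal M^*\mathcal M$ formally on a dense class, read off its kernel, and then get boundedness and the exact norm $\|\mathcal M\|^2=\|K\|$ from a weighted Schur test. Take $f\in C_c(\D)$, so that all moments converge absolutely. Then
\[
 \|\mathcal M f\|^2=\frac1\pi\sum_{p,q\ge0}\Bigl|\int_\D f(w)\overline w^{\,p}w^q\,\dd A(w)\Bigr|^2
 =\frac1\pi\iint_{\D\times\D} f(w)\overline{f(z)}\sum_{p,q}(z\overline w)^p(\overline z w)^q\,\dd A(w)\,\dd A(z).
\]
Since $\operatorname{supp} f$ lies in a disk of radius $<1$, the double series converges uniformly there, so Fubini applies. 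The double sum factorizes as $(1-z\overline w)^{-1}(1-\overline z w)^{-1}=|1-z\overline w|^{-2}$. Hence $\|\mathcal Mf\|^2=\ip{Kf}{f}$ with $K$ as in \eqref{eq:K-definition}, and polarization gives $\mathcal M^*\mathcal M=K$ on this dense class. The kernel is positive, so $K$ is a positive form.

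For the angular block we write $z=re^{i\theta}$ and $w=\rho e^{i\phi}$ and expand
\[
|1-z\overline w|^{-2}=\sum_{p,q}(r\rho)^{p+q}e^{i(p-q)(\theta-\phi)}.
\]
Collecting the terms with $p-q=m$ gives $\sum_{j\ge0}(r\rho)^{|m|+2j}=(r\rho)^{|m|}/(1-r^2\rho^2)$. This matches \eqref{eq:sigma-mj} and gives \eqref{eq:angular-block-r}, with the $2\pi$ from angular integration absorbed by the $\frac1\pi$ normalization and the measure $\rho\,\dd\rho\,\dd\phi$. I will just check the constant.

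The main step is the exact norm $\|K\|=\pi$. For the upper bound I would use the weighted Schur test with weight $h(w)=(1-|w|^2)^{-1/2}$. The key integral is
\[
\frac1\pi\int_\D\frac{(1-|w|^2)^{-1/2}}{|1-z\overline w|^2}\,\dd A(w).
\]
Expanding $|1-z\overline w|^{-2}$ in angular modes, only the $m=0$ term survives the angular integration, leaving
\[
2\sum_j r^{2j}\int_0^1\rho^{2j+1}(1-\rho^2)^{-1/2}\,\dd\rho=\sum_j r^{2j}B(j+1,\tfrac12).
\]
Since $B(j+1,\tfrac12)=\Gamma(j+1)\sqrt\pi/\Gamma(j+\tfrac32)$ and $\sum_j \frac{(1)_j(1)_j}{(3/2)_j\,j!}r^{2j}$ appears, the sum is $2\,{}_2F_1(1,1;\tfrac32;r^2)$. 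One should check that this is bounded by $\pi(1-r^2)^{-1/2}$. It is, via the identity ${}_2F_1(1,1;\tfrac32;x)=\arcsin\sqrt x/\sqrt{x(1-x)}$: this gives $2\arcsin(r)/(r\sqrt{1-r^2})\le\pi/\sqrt{1-r^2}$, with the supremum of the ratio equal to $\pi$ as $r\to1$. The kernel is symmetric, so Schur gives $\|K\|\le\pi$, hence $\mathcal M$ extends boundedly with $\|\mathcal M\|\le\sqrt\pi$, and the identity $K=\mathcal M^*\mathcal M$ extends by density.

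For the lower bound I would test against radial functions concentrated near the boundary: $f_\epsilon=(1-|w|^2)^{-1/2+\epsilon}$, normalized, or equivalently indicator functions of thin annuli weighted by $h$. The same hypergeometric computation shows that the Rayleigh quotient tends to $\pi$, since the Schur bound is asymptotically sharp as $r\to1$. I expect this lower-bound computation to be the most delicate part. First I would try it with power weights $(1-\rho^2)^{-1/2+\epsilon}$, whose $K$-image is again an explicit ${}_2F_1$, and let $\epsilon\downarrow0$. Together the two bounds give $\|\mathcal M\|=\sqrt{\|K\|}=\sqrt\pi$.
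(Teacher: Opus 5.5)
Your proposal is correct and follows essentially the same route as the paper. The kernel comes from summing the geometric double series $\sum_{p,q}(z\overline w)^p(\overline z w)^q$, the bound $\norm K\le\pi$ from the Schur test with weight $(1-|w|^2)^{-1/2}$ via $2\arcsin r/(r\sqrt{1-r^2})\le\pi(1-r^2)^{-1/2}$, and the lower bound from the near-extremizers $(1-t)^{-1/2+\epsilon}$ on the radial block; the paper only cites these as standard, so your sketch is at the same level of detail. Your quadratic-form computation on $C_c(\D)$ justifies the interchange of sum and integral slightly more cleanly than the paper's partial-sum argument, but note that positivity of $K$ comes from $\ip{Kf}{f}=\norm{\mathcal Mf}^2$, not from pointwise positivity of the kernel.
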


\begin{proof}
For a finitely supported sequence $c=(c_{p,q})$,
\[
 ( \mathcal{M}^*c)(z)
 =\frac1{\sqrt\pi}\sum_{p,q\geq0}c_{p,q}z^p\overline z^{\,q}.
\]
Consequently, finite partial sums give the kernel
\[
 \frac1\pi\sum_{p,q\geq0}z^p\overline z^{\,q}
 \overline w^{\,p}w^q
 =\frac1{\pi|1-z\overline w|^2}.
\]
The boundedness proved in Proposition~\ref{prop:norm} then extends the quadratic-form identity to all of $L^2$.  For fixed $m$, summing over $j$ gives
\[
 \sum_{j=0}^\infty(r\rho)^{|m|+2j}
 =\frac{(r\rho)^{|m|}}{1-r^2\rho^2}.
\]
\end{proof}

The Poisson-kernel identity reassembles the angular blocks into the
global kernel:
\begin{equation}
 \sum_{m\in\Z}\frac{(r\rho)^{|m|}}{1-r^2\rho^2}
 e^{im(\theta-\phi)}
 =\frac1{|1-z\overline w|^2}.
 \label{eq:block-sum}
\end{equation}
Under $t=r^2$ and $s=\rho^2$, the $m$th radial block is unitarily equivalent to
\begin{equation}
 (K_mh)(t)=\int_0^1\frac{(st)^{|m|/2}}{1-st}h(s)\,\dd s.
 \label{eq:block-t}
\end{equation}
The case $m=0$ is exactly \eqref{eq:radial-hilbert}.

\begin{remark}[Raw multiplicity]
With the same block convention as \eqref{eq:angular-block-r}, put $x=r\rho$.  The ordinary entrywise $\ell^2$ norm gives the raw block kernel
\[
 k_m^{\mathrm{raw}}(r,\rho)
 =(|m|+1)x^{|m|}+2\sum_{j=1}^\infty x^{|m|+2j}
 =\frac{2x^{|m|}}{1-x^2}+(|m|-1)x^{|m|}.
\]
Thus the raw block is twice the reduced block plus the rank-one edge term $(|m|-1)r^{|m|}\rho^{|m|}$.  For $m=0$ this is the familiar negative constant correction, for $|m|=1$ the correction vanishes, and for $|m|\geq2$ it is positive.  The normalization \eqref{eq:sharp-data-norm} removes exactly these exceptional edge terms and is the reason the blocks reassemble into the invariant kernel \eqref{eq:block-sum}.
\end{remark}

\section{Exact norm and non-compactness}
\label{sec:elementary}

The following arguments establish the operator before any spectral diagonalization is used.

\begin{proposition}[Weighted Schur test]
\label{prop:norm}
The operator $K$ is bounded, positive and self-adjoint on $L^2(\D,\dd A)$, and
\begin{equation}
 \norm K=\pi.
 \label{eq:exact-norm}
\end{equation}
Consequently, $\norm{\mathcal{M}}=\sqrt\pi$.
\end{proposition}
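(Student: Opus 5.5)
The plan is to prove the upper bound by a weighted Schur test, the lower bound with explicit near-extremal test functions, and to read off positivity and self-adjointness from the factorization $K=\mathcal M^*\mathcal M$.

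\textbf{Upper bound.} The kernel $\kappa(z,w)=\pi^{-1}|1-z\overline w|^{-2}$ is positive and symmetric, so a weighted Schur test gives $\norm K\le C$ as soon as some $h>0$ satisfies $\int_\D \kappa(z,w)h(w)\,\dd A(w)\le C\,h(z)$. I would try $h(w)=(1-|w|^2)^{-1/2}$. Expanding $|1-z\overline w|^{-2}=\sum_{p,q}z^p\overline z^{\,q}\overline w^{\,p}w^q$ and integrating against the radial weight leaves only the diagonal terms $p=q$. This gives
\[
 \int_\D\frac{(1-|w|^2)^{-1/2}}{\pi|1-z\overline w|^2}\,\dd A(w)=\sum_{p\ge0}|z|^{2p}\int_0^1 t^p(1-t)^{-1/2}\,\dd t
 =\sum_{p\ge0}|z|^{2p}B(p+1,\tfrac12).
\]
Since $B(p+1,\tfrac12)=\frac{\Gamma(p+1)\Gamma(1/2)}{\Gamma(p+3/2)}$, the sum is the hypergeometric series ${}_2F_1(1,1;\tfrac32;x)$ times a constant, with $x=|z|^2$. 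Comparing with $(1-x)^{-1/2}$ directly, I note that $B(p+1,\tfrac12)\,\Gamma(p+1/2)/(\Gamma(1/2)\,p!)=\Gamma(p+1/2)/\Gamma(p+3/2)\cdot\sqrt\pi=\sqrt\pi\,\Gamma(1/2)/(p+1/2)\cdot\frac{1}{\sqrt\pi}$. Rather than rely on this loose coefficient matching, I would evaluate the integral in closed form. Substituting $u=1-t$, one gets $\sum_p x^p B(p+1,\tfrac12)=\int_0^1(1-t)^{-1/2}(1-xt)^{-1}\,\dd t$, and the substitution $v=\sqrt{1-t}$ turns this into
\[
 \frac{2}{\sqrt{1-x}}\,\operatorname{artanh}\sqrt{1-x}\Big/\!\ldots,
\]
which is \emph{not} bounded by $C(1-x)^{-1/2}$: it has a logarithmic blow-up. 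So the exponent $-\tfrac12$ is exactly critical, and I would instead take $h_\alpha(w)=(1-|w|^2)^{-\alpha}$ with $0<\alpha<1$. By the same computation,
\[
 Kh_\alpha(z)=\int_0^1\frac{(1-t)^{-\alpha}}{1-xt}\,\dd t,
\]
and the task is to bound this by $C_\alpha(1-x)^{-\alpha}$. After the substitution $1-t=(1-x)s/x$ and letting $x\to1$, the constant is $C_\alpha=\int_0^\infty s^{-\alpha}(1+s)^{-1}\,\dd s=\pi/\sin(\pi\alpha)$. For $\alpha=\tfrac12$ this equals $\pi$, so the logarithm above came from my crude evaluation and should be rechecked. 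The real content of this step is the uniform inequality
\[
 \int_0^1\frac{(1-t)^{-1/2}}{1-xt}\,\dd t\le\frac{\pi}{\sqrt{1-x}}\qquad(0\le x<1).
\]
I would prove it via $v=\sqrt{1-t}$: the integral equals $2\int_0^1\frac{\dd v}{(1-x)+xv^2}=\frac{2}{\sqrt{x(1-x)}}\arctan\sqrt{x/(1-x)}$, and $\arctan(\cdot)\le\min(\pi/2,\cdot)$ gives the bound $\pi/\sqrt{1-x}$ directly. Hence $\norm K\le\pi$.

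\textbf{Lower bound.} I would test $K$ on $f_\epsilon=(1-|w|^2)^{-1/2+\epsilon}$, normalized in $L^2$. Restricting to radial functions reduces $K$ to the $m=0$ block \eqref{eq:block-t}, i.e.\ the Hilbert kernel on $(0,1)$. Near $t=1$ this kernel behaves like a Mellin convolution in $1-t$, with symbol $\pi/\sin(\pi\alpha)$ on the line $\alpha=\tfrac12+i\tau$, and that symbol attains $\pi$ at $\tau=0$. A direct estimate $\langle Kf_\epsilon,f_\epsilon\rangle/\norm{f_\epsilon}^2\to\pi$ as $\epsilon\to0$ then gives $\norm K\ge\pi$. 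This asymptotic is the main obstacle; I would handle it by writing both quantities as Beta-function sums and using Abel-type asymptotics.

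\textbf{Positivity and self-adjointness.} These follow at once from the factorization: $\langle Kf,f\rangle=\norm{\mathcal Mf}^2\ge0$ on the dense finite span, extended by boundedness. Finally, $\norm{\mathcal M}^2=\norm{\mathcal M^*\mathcal M}=\pi$, so $\norm{\mathcal M}=\sqrt\pi$.
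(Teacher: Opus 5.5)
Your route is the paper's: a Schur test with weight $(1-|w|^2)^{-1/2}$, near-extremizers $(1-t)^{-1/2+\epsilon}$ in the radial block, and positivity and self-adjointness from $K=\mathcal M^*\mathcal M$. Your closed form
\[
\int_0^1\frac{(1-t)^{-1/2}}{1-xt}\,\dd t=\frac{2}{\sqrt{x(1-x)}}\arctan\sqrt{\frac{x}{1-x}}
\]
is correct. With $x=r^2$ it is exactly the paper's $2\arcsin r/(r\sqrt{1-r^2})$. The claimed logarithmic blow-up is false, so drop it together with the detour through $h_\alpha$. For $\alpha\neq\frac12$ the Schur constant $\pi/\sin(\pi\alpha)$ is in any case worse than $\pi$.

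\textbf{The gap is the final inequality.} The bound $\arctan(\cdot)\le\pi/2$ gives $\pi/\sqrt{x(1-x)}$. This exceeds the target $\pi/\sqrt{1-x}$ by the factor $x^{-1/2}>1$ for every $x<1$. The bound $\arctan y\le y$ gives $2/(1-x)$, which exceeds $\pi/\sqrt{1-x}$ as soon as $x>1-4/\pi^2$. So on $(1-4/\pi^2,1)$, the boundary region that determines the norm, neither bound nor their minimum yields $Kh\le\pi h$. Used optimally, your two bounds only give $\norm K\le\sqrt{\pi^2+4}$, which does not prove the exact norm.

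\textbf{The fix.} You must keep the factor $\sqrt x$. Since $\arctan\sqrt{x/(1-x)}=\arcsin\sqrt x$, the required inequality is $\arcsin s\le\frac{\pi}{2}s$ on $[0,1]$. This follows from convexity of $\arcsin$ on $[0,1]$, equivalently Jordan's inequality $\sin\theta\ge 2\theta/\pi$ on $[0,\pi/2]$. With that one line the Schur test gives $\norm K\le\pi$.

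\textbf{The lower bound.} Your plan is the paper's and closes quickly. For $h_\epsilon(t)=(1-t)^{-1/2+\epsilon}$ you have $\norm{h_\epsilon}^2=1/(2\epsilon)$ and $\ip{Hh_\epsilon}{h_\epsilon}=\sum_{p\ge0}B(p+1,\tfrac12+\epsilon)^2$. Moreover $B(p+1,\tfrac12+\epsilon)=\Gamma(\tfrac12+\epsilon)(p+1)^{-1/2-\epsilon}\bigl(1+O(1/p)\bigr)$ uniformly in $\epsilon\in(0,1]$. Hence the Rayleigh quotient is $2\epsilon\,\Gamma(\tfrac12+\epsilon)^2\bigl(\zeta(1+2\epsilon)+O(1)\bigr)\to\Gamma(\tfrac12)^2=\pi$.
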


\begin{proof}
For the non-negative symmetric kernel
\[
 \kappa(z,w)=\frac1{\pi|1-z\overline w|^2},
\]
we use the Schur test  in the form given, for example, in
\cite[Section~4]{HalmosSunder1978} with  weight $\chi(z)=(1-|z|^2)^{-1/2}$.  With $z=re^{i\theta}$ and $w=\rho e^{i\phi}$,
\[
 \int_0^{2\pi}\frac{\dd\phi}
 {1-2r\rho\cos\phi+r^2\rho^2}
 =\frac{2\pi}{1-r^2\rho^2}.
\]
Therefore
\begin{align}
 \int_{\D}\kappa(z,w)\chi(w)\,\dd A(w)
 &=2\int_0^1\frac{\rho\,\dd\rho}
 {(1-r^2\rho^2)\sqrt{1-\rho^2}}\notag\\
 &=\frac{2\arcsin r}{r\sqrt{1-r^2}}
 \leq \pi\chi(z).
 \label{eq:schur-calculation}
\end{align}
Schur's test gives $\norm K\leq\pi$.

The closed radial subspace reduces $K$.  The unitary map
\[
 (Wf)(t)=\sqrt\pi f(\sqrt t)
\]
identifies the radial restriction with $H$ in \eqref{eq:radial-hilbert}.  The standard near-extremizers $(1-t)^{-1/2+\epsilon}$ show $\norm H\geq\pi$, and hence $\norm K\geq\pi$.  Positivity follows from the factorization in Proposition~\ref{prop:normal-factorization}, and self-adjointness follows from symmetry and boundedness.
\end{proof}

\begin{proposition}[Boundary-annulus non-compactness]
\label{prop:noncompact}
The operator $K$ is not compact.
\end{proposition}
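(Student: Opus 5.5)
We construct a weakly null sequence of unit vectors $f_n$, each supported in a thin annulus near $\partial\D$, and show that $\ip{Kf_n}{f_n}$ stays bounded below by a positive constant. A compact operator maps weakly null sequences to norm-null ones, which would force $\ip{Kf_n}{f_n}\to0$, so this suffices. To keep the computation one-dimensional we take the $f_n$ radial. The radial subspace reduces $K$, and by the unitary map $W$ from the proof of Proposition~\ref{prop:norm} the radial part is the Hilbert-kernel operator $H$ of \eqref{eq:radial-hilbert} on $L^2(0,1)$. It is therefore enough to find unit vectors $h_n\in L^2(0,1)$ supported in $[1-\delta_n,1]$, with $\delta_n\to0$, such that $\ip{Hh_n}{h_n}\ge c>0$.

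The natural choice uses the scaling behaviour of the kernel at the corner $s=t=1$. Put $s=1-\delta x$ and $t=1-\delta y$ with $x,y\in[0,1]$. Then $1-st=\delta(x+y)-\delta^2xy$, which lies between $\delta(x+y)/2$ and $\delta(x+y)$. Take $h_\delta(t)=\delta^{-1/2}\,\mathbf 1_{[1-\delta,1]}(t)$, which has unit norm. This gives
\[
\ip{Hh_\delta}{h_\delta}=\int_0^1\!\!\int_0^1\frac{\delta\,\dd x\,\dd y}{\delta(x+y)-\delta^2xy}\ \ge\ \int_0^1\!\!\int_0^1\frac{\dd x\,\dd y}{x+y}=2\log2>0 .
\]
The lower bound does not depend on $\delta$, because the kernel $1/(x+y)$ is homogeneous of degree $-1$ near the corner. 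This scale invariance is the real source of non-compactness.

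For weak nullity, take $\delta_n=2^{-n}$. The $h_{\delta_n}$ are unit vectors with shrinking supports, so for any $g\in L^2$ Cauchy--Schwarz gives $|\ip{h_{\delta_n}}{g}|\le\norm{g\,\mathbf 1_{[1-\delta_n,1]}}_{L^2}\to0$. Transport back with $f_n=W^{-1}h_{\delta_n}$. These are unit radial functions supported in the annuli $\sqrt{1-\delta_n}\le|z|\le1$, they are weakly null, and $\ip{Kf_n}{f_n}=\ip{Hh_{\delta_n}}{h_{\delta_n}}\ge2\log2$. If $K$ were compact, $Kf_n\to0$ in norm and hence $\ip{Kf_n}{f_n}\to0$, which is a contradiction.

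The main obstacle is the uniform lower bound near the corner, which is the displayed estimate. One point needs care: since $xy\ge0$, the denominator satisfies $\delta(x+y)-\delta^2xy\le\delta(x+y)$. This yields the clean $\delta$-independent bound $2\log2$ without any limiting argument. Everything else is routine bookkeeping with the unitary $W$ and the fact that the radial subspace reduces $K$, both already established in the proof of Proposition~\ref{prop:norm}.
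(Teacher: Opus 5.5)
Your proof is correct and follows the paper's route: your $f_n=W^{-1}h_{\delta_n}$ are exactly the paper's normalized boundary annuli $F_a$ with $a=\delta_n$, and you reduce them through $W$ to the radial Hilbert-kernel operator $H$ just as the paper does. The only difference is in the final estimate. The paper computes $\norm{Hh_a}_2^2=\int_0^\infty x^{-2}\log^2(1+x)\,\dd x=\pi^2/3$ exactly, while you bound the quadratic form below by $2\log 2$ using the corner rescaling. Your version is slightly more elementary, and it also spells out the weak-nullity step that the paper only asserts.
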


\begin{proof}
For $0<a<1$ set
\begin{equation}
 F_a(z)=\frac1{\sqrt{\pi a}}
 \mathbf1_{\{1-a<|z|^2<1\}}(z).
 \label{eq:annulus-sequence}
\end{equation}
Then $\norm{F_a}_2=1$ and $F_a\rightharpoonup0$ as $a\downarrow0$.  Under the radial unitary map, $F_a$ becomes
\[
 h_a(t)=a^{-1/2}\mathbf1_{(1-a,1)}(t).
\]
A direct calculation gives
\[
 (Hh_a)(t)=\frac1{\sqrt a\,t}
 \log\!\left(\frac{1-(1-a)t}{1-t}\right).
\]
Using $x=at/(1-t)$,
\begin{align*}
 \norm{Hh_a}_2^2
 &=\int_0^\infty\frac{\log^2(1+x)}{x^2}\,\dd x\\
 &=2\int_0^\infty\frac{\log(1+x)}{x(1+x)}\,\dd x
 =2\zeta(2)=\frac{\pi^2}{3}.
\end{align*}
Thus $\norm{KF_a}=\pi/\sqrt3$ for every $a$.  A compact operator maps a bounded weakly null sequence to a norm-null sequence, which is impossible here.
\end{proof}

\section{A global differential commutator}
\label{sec:commutator}

Set $t=r^2$.  The angular blocks \eqref{eq:block-t} suggest the differential expression
\begin{equation}
 \mathcal P
 =\partial_t\!\left(t(1-t)^2\partial_t\right)+t
 +\frac14\left(t+\frac1t\right)\partial_\theta^2.
 \label{eq:P-t}
\end{equation}
In $(r,\theta)$ variables,
\begin{equation}
 4\mathcal P
 =\frac1r\partial_r\!\left(r(1-r^2)^2\partial_r\right)
 +\left(r^2+\frac1{r^2}\right)\partial_\theta^2+4r^2.
 \label{eq:P-r}
\end{equation}
This expression is formally self-adjoint with respect to Euclidean area $r\,\dd r\,\dd\theta$.

\begin{proposition}[Direct kernel identity]
\label{prop:kernel-commutator}
For $z,w\in\D$,
\begin{equation}
 \mathcal P_z\kappa(z,w)=\mathcal P_w\kappa(z,w),
 \qquad
 \kappa(z,w)=\frac1{\pi|1-z\overline w|^2}.
 \label{eq:kernel-commutator}
\end{equation}
Consequently,
\begin{equation}
 \mathcal P Kf=K\mathcal P f,
 \qquad f\in C_c^\infty(\D).
 \label{eq:core-commutation}
\end{equation}
\end{proposition}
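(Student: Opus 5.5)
The plan is to prove the kernel identity \eqref{eq:kernel-commutator} block by block, using the angular decomposition of Proposition~\ref{prop:normal-factorization}. Then we pass from the kernel identity to \eqref{eq:core-commutation} by differentiating under the integral sign and integrating by parts in $w$.

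\emph{Step 1 (reduction to angular blocks).} With $t=r^2$, $s=\rho^2$ and $a=|m|/2$, \eqref{eq:block-sum} gives
\[
 \pi\kappa(z,w)=\sum_{m\in\Z}g_a(t,s)\,e^{im(\theta-\phi)},
 \qquad g_a(t,s)=\frac{(st)^{a}}{1-st}.
\]
For fixed $z,w\in\D$ we have $|z\overline w|<1$. The series and all its termwise $t,s,\theta,\phi$ derivatives up to order two then converge absolutely and locally uniformly, because they are dominated by $\sum_m (1+m^2)^2 |z\overline w|^{|m|}$ times a locally bounded factor. So $\mathcal P$ may be applied termwise. On the $m$th mode, $\partial_\theta^2$ becomes $-m^2=-4a^2$, and $\partial_\phi^2$ gives the same factor. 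Hence \eqref{eq:kernel-commutator} reduces to the one-variable identity
\[
 \mathcal P^{(a)}_t g_a=\mathcal P^{(a)}_s g_a,\qquad
 \mathcal P^{(a)}_t=\partial_t\bigl(t(1-t)^2\partial_t\bigr)+t-a^2\Bigl(t+\frac1t\Bigr).
\]

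\emph{Step 2 (the one-variable identity).} Write $g_a(t,s)=F(u)$ with $u=st$ and $F(u)=u^a/(1-u)$. Then $\partial_t F=sF'(u)$, and expanding $(1-t)^2=1-2t+t^2$ gives
\[
 \partial_t\bigl(t(1-t)^2 sF'\bigr)
 =(1-t)^2\bigl(sF'+usF''\bigr)-2t(1-t)sF'.
\]
I will collect the result as a combination of the non-symmetric monomials $s$, $st^2$, $t$ and $1/t$ (equivalently $1$, $u$, $t$, $s$, $1/t$, $1/s$ after multiplying through by suitable powers). Each coefficient is a function of $u$ built from $F$, $F'$, $F''$. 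The difference $\mathcal P^{(a)}_t g_a-\mathcal P^{(a)}_s g_a$ then has the form
\[
 (s-t)A(u)+\Bigl(\frac1t-\frac1s\Bigr)B(u)+(t-s)C(u),
\]
where one uses $st^2-ts^2=u(t-s)$ and $1/t-1/s=(s-t)/u$. Everything is therefore proportional to $(t-s)$. The identity amounts to a single ODE-type relation: $uF''+F'$-type terms, together with the potential $t$ and the $a^2$ terms, must cancel for $F=u^a/(1-u)$. I expect this to be a short direct check: $F'/F=a/u+1/(1-u)$ gives $F'$ and $F''$ as rational multiples of $F$, and the bracket vanishes identically, with the $a^2/u$ pieces cancelling the $a^2(t+1/t)$ contributions. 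As a cross-check, one can instead verify the identity directly on the closed form $\pi\kappa=(1-2r\rho\cos(\theta-\phi)+r^2\rho^2)^{-1}$, since this avoids the series altogether.

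\emph{Step 3 (operator commutation).} Let $f\in C_c^\infty(\D)$ with $\operatorname{supp}f\subset\{|w|\le R\}$, $R<1$. For $z\in\D$ we have $|z\overline w|\le R$, so $\kappa$ is smooth in $z$ uniformly over $w$ in the support, and $\mathcal P_z Kf=\int\mathcal P_z\kappa\,f\,\dd A=\int(\mathcal P_w\kappa)f\,\dd A$. It remains to move $\mathcal P_w$ onto $f$ using formal self-adjointness with respect to $\rho\,\dd\rho\,\dd\phi$.

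I expect this integration by parts to be the main obstacle. The term $\rho^{-2}\partial_\phi^2$ in \eqref{eq:P-r} is singular at $w=0$. For smooth $f$ one has $\partial_\phi^2 f=O(\rho)$, so $\rho^{-2}\partial_\phi^2 f=O(\rho^{-1})$, which is still integrable against area measure, and the same holds for $\kappa$ in $w$. I would excise a disc $|w|<\varepsilon$ and integrate by parts on the annulus. The $\phi$-integration by parts produces no boundary terms by periodicity. The radial boundary terms are of the form $\varepsilon(1-\varepsilon^2)^2(\partial_\rho\kappa\, f-\kappa\,\partial_\rho f)$ integrated over the circle, and they are $O(\varepsilon)\to0$. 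There is no outer boundary term because $f$ has compact support. Letting $\varepsilon\to0$, with the integrand dominated by an integrable function, yields $\int(\mathcal P_w\kappa)f=\int\kappa\,\mathcal P f=K\mathcal P f$.
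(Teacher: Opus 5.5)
Your proof is correct, but it reaches the kernel identity by a different route from the paper.

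\textbf{Your route.} You verify the identity block by block. Carrying out your Step~2, $\mathcal P^{(a)}_tg_a-\mathcal P^{(a)}_sg_a$ equals $(t-s)$ times $(u-1)\bigl(uF'\bigr)'+2uF'+(1-a^2)F+a^2F/u$. Using $F'/F=a/u+1/(1-u)$, this bracket vanishes identically, so the check you anticipated does go through.

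\textbf{The paper's route.} The paper never separates variables. It writes $\mathcal P=\mathcal L+\frac12\partial_\theta^2+1$ with $\mathcal Lf=\frac{\omega}{4}\Delta_E(\omega f)$ and $\omega=1-|z|^2$. Since $\pi\kappa=\delta/(\omega(z)\omega(w))$ with $\delta=\sech^2(d_H/2)$, this gives $\mathcal L_z\kappa=\Delta_{H,z}\delta/(\pi\omega(z)\omega(w))$. The radial equation $F''+\coth(d)F'=-F^2$ then yields $\Delta_{H,z}\delta=-\delta^2=\Delta_{H,w}\delta$. The $\partial_\theta^2$ term is symmetric because $\kappa$ depends only on $\theta-\phi$.

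\textbf{What each buys.} Your argument is elementary and mirrors how $\mathcal P$ was read off from \eqref{eq:block-t}. However, it needs the termwise-differentiation justification and does not explain why the identity holds. The paper's argument is global and series-free, gives the explicit value \eqref{eq:L-kernel}, and is exactly the conjugation $\mathsf L=U^{-1}\Delta_HU$ used afterwards.

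\textbf{One refinement.} The singularity at the origin is a polar-coordinate artifact. Indeed, $4\mathcal P=\nabla\cdot\bigl((1-|z|^2)^2\nabla\bigr)+2\partial_\theta^2+4|z|^2$ with $\partial_\theta=x\partial_y-y\partial_x$, so $\mathcal P$ has smooth coefficients and is formally symmetric for $\dd A$. This makes your excision in Step~3 unnecessary, though it is correct. It also fills a small hole in Step~1. There the factor in your domination bound is unbounded near $t=0$ or $s=0$: derivatives of $(st)^a$ produce $t^{a-1}$ and $t^{a-2}$, and $\mathcal P$ itself contains $t^{-1}\partial_\theta^2$. So the block computation proves the identity only for $z,w\neq0$, and continuity of both sides extends it to all of $\D\times\D$.
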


\begin{proof}
Let $\omega(z)=1-|z|^2$ and define
\begin{equation}
 \mathcal L=\mathcal P-\frac12\partial_\theta^2-1.
 \label{eq:L-def}
\end{equation}
A direct expansion gives
\begin{equation}
 \mathcal Lf=\frac{\omega}{4}\Delta_E(\omega f).
 \label{eq:L-euclidean}
\end{equation}
Introduce
\begin{equation}
 \delta(z,w)=\frac{\omega(z)\omega(w)}{|1-z\overline w|^2}.
 \label{eq:delta}
\end{equation}
For the Poincar\'e metric introduced in Section~\ref{sec:hyperbolic}, $\delta(z,w)=\sech^2(d_H(z,w)/2)$.  If $F(d)=\sech^2(d/2)$, then
\begin{equation}
 F''(d)+\coth(d)F'(d)=-F(d)^2.
 \label{eq:radial-delta-identity}
\end{equation}
Hence $\Delta_{H,z}\delta=-\delta^2=\Delta_{H,w}\delta$.  Using \eqref{eq:L-euclidean}, this gives
\begin{equation}
 \mathcal L_z\kappa(z,w)
 =-\frac{\omega(z)\omega(w)}{\pi|1-z\overline w|^4}
 =\mathcal L_w\kappa(z,w).
 \label{eq:L-kernel}
\end{equation}
The kernel depends on the angular variables only through $\theta-\phi$, so $\partial_\theta^2\kappa=\partial_\phi^2\kappa$.  Equations \eqref{eq:L-def} and \eqref{eq:L-kernel} prove \eqref{eq:kernel-commutator}.  For compactly supported $f$, formal self-adjointness and integration by parts have no boundary contribution, giving \eqref{eq:core-commutation}.
\end{proof}

Equation \eqref{eq:core-commutation} alone does not specify a self-adjoint realization of the unbounded differential expression.  The canonical domain and full operator commutation will follow from the hyperbolic realization.

\section{Hyperbolic and Berezin realization}
\label{sec:hyperbolic}

Equip $\D$ with the Poincar\'e metric of curvature $-1$,
\begin{equation}
 \dd s_H^2=\frac{4|\dd z|^2}{(1-|z|^2)^2},
 \qquad
 \dd\mu_H=\frac{4\,\dd A}{(1-|z|^2)^2},
 \label{eq:poincare-metric}
\end{equation}
and Laplace--Beltrami operator
\begin{equation}
 \Delta_H=\frac{(1-|z|^2)^2}{4}\Delta_E.
 \label{eq:hyperbolic-laplacian}
\end{equation}
Our sign convention makes $\Delta_H$ non-positive on $L^2(\D,\dd\mu_H)$.

Define
\begin{equation}
 U:L^2(\D,\dd A)\longrightarrow L^2(\D,\dd\mu_H),
 \qquad
 (Uf)(z)=\frac{1-|z|^2}{2}f(z).
 \label{eq:U}
\end{equation}
This map is unitary.

\begin{theorem}[Hyperbolic conjugation]
\label{thm:hyperbolic-conjugation}
Let $\mathsf L$ be the self-adjoint realization of \eqref{eq:L-euclidean} defined by
\begin{equation}
 \mathsf L=U^{-1}\Delta_HU.
 \label{eq:L-operator}
\end{equation}
Then
\begin{equation}
 (UKU^{-1}g)(z)
 =\frac1{4\pi}\int_{\D}
 \sech^2\!\left(\frac{d_H(z,w)}2\right)
 g(w)\,\dd\mu_H(w).
 \label{eq:hyperbolic-convolution}
\end{equation}
Thus $UKU^{-1}$ is an isometry-invariant radial convolution on the hyperbolic plane.
\end{theorem}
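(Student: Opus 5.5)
The proof is a direct change of variables in the integral kernel, followed by the identification of $\delta$ with a function of hyperbolic distance.

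First I check that $U$ is unitary. For $f\in L^2(\D,\dd A)$,
\[
\int_{\D}|Uf|^2\,\dd\mu_H=\int_{\D}\frac{(1-|z|^2)^2}{4}|f|^2\,\frac{4\,\dd A}{(1-|z|^2)^2}=\norm f_2^2 .
\]
The map is onto, with inverse $U^{-1}g=2g/(1-|z|^2)$. Consequently $\mathsf L=U^{-1}\Delta_HU$ is self-adjoint on $U^{-1}\Dom(\Delta_H)$, taking the Friedrichs (equivalently, the unique, since hyperbolic space is complete) realization of $\Delta_H$. On smooth functions it agrees with \eqref{eq:L-euclidean}:
\[
U^{-1}\Delta_HUf=\frac{2}{\omega}\cdot\frac{\omega^2}{4}\Delta_E\!\left(\frac{\omega f}{2}\right)=\frac{\omega}{4}\Delta_E(\omega f).
\]

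Next I compute the conjugated integral operator. For $g\in L^2(\D,\dd\mu_H)$,
\[
(UKU^{-1}g)(z)=\frac{\omega(z)}{2}\cdot\frac1\pi\int_{\D}\frac{1}{|1-z\overline w|^2}\,\frac{2g(w)}{\omega(w)}\,\dd A(w).
\]
Substituting $\dd A(w)=\tfrac{\omega(w)^2}{4}\,\dd\mu_H(w)$ turns this into
\[
\frac{1}{4\pi}\int_{\D}\frac{\omega(z)\omega(w)}{|1-z\overline w|^2}\,g(w)\,\dd\mu_H(w)=\frac1{4\pi}\int_{\D}\delta(z,w)\,g(w)\,\dd\mu_H(w),
\]
with $\delta$ as in \eqref{eq:delta}. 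Since $K$ is bounded by Proposition~\ref{prop:norm} and $U$ is unitary, this identity, checked on a dense set, extends to all of $L^2(\D,\dd\mu_H)$.

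The one substantive point is the claim $\delta(z,w)=\sech^2(d_H(z,w)/2)$. I would use the standard identity
\[
1-\left|\frac{z-w}{1-z\overline w}\right|^2=\frac{(1-|z|^2)(1-|w|^2)}{|1-z\overline w|^2},
\]
together with the curvature $-1$ distance formula $\tanh(d_H/2)=\bigl|\tfrac{z-w}{1-z\overline w}\bigr|$. This gives $\delta=1-\tanh^2(d_H/2)=\sech^2(d_H/2)$. The Möbius identity itself follows by expanding $|1-z\overline w|^2-|z-w|^2=1-|z|^2-|w|^2+|z|^2|w|^2$.

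Invariance is then immediate, because $d_H$ is preserved by disk automorphisms and $\mu_H$ is invariant. The kernel is therefore a radial function of distance, and $UKU^{-1}$ is a hyperbolic convolution with the $L^1$-radial profile $(4\pi)^{-1}\sech^2(d/2)$. I expect no step to present a real obstacle. The only care needed is being precise about the normalization of $d_H$ so that the half-angle is correct, and about specifying the self-adjoint domain of $\mathsf L$.
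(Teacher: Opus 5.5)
Your proposal is correct and is essentially the paper's proof: conjugate the kernel of $K$ by $U$, rewrite $\dd A(w)=\tfrac14\omega(w)^2\,\dd\mu_H(w)$, and identify $\omega(z)\omega(w)/|1-z\overline w|^2$ with $\sech^2(d_H(z,w)/2)$. The paper makes that identification through $\cosh^2(d_H/2)=|1-z\overline w|^2/(\omega(z)\omega(w))$, and you make it through the equivalent $\tanh(d_H/2)=|z-w|/|1-z\overline w|$.

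One correction: the profile $h(d)=(4\pi)^{-1}\sech^2(d/2)$ is \emph{not} in $L^1(\mathbb H^2)$, since $2\pi h(d)\sinh d=\tanh(d/2)\to1$. It lies only in $L^2$, so the ``$L^1$'' qualifier must be dropped. This does no damage here only because you take boundedness from Proposition~\ref{prop:norm} rather than from a Young-type convolution bound.
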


\begin{proof}
The first assertion is \eqref{eq:L-euclidean} and the definitions of $U$ and $\Delta_H$.  The hyperbolic distance satisfies
\begin{equation}
 \cosh^2\!\left(\frac{d_H(z,w)}2\right)
 =\frac{|1-z\overline w|^2}
 {(1-|z|^2)(1-|w|^2)}.
 \label{eq:distance-identity}
\end{equation}
Substitution of \eqref{eq:U} and \eqref{eq:distance-identity} into \eqref{eq:K-definition} gives \eqref{eq:hyperbolic-convolution}.
\end{proof}

The invariant factor
\begin{equation}
 \delta(z,w)=\frac{(1-|z|^2)(1-|w|^2)}{|1-z\overline w|^2}
 \label{eq:hardy-overlap}
\end{equation}
is the squared overlap of normalized Hardy kernels
\[
 s_z(\zeta)=\frac{\sqrt{1-|z|^2}}{1-\overline z\zeta},
 \qquad
 |\ip{s_z}{s_w}_{H^2}|^2=\delta(z,w).
\]
For $s>1$, the mass-normalized weighted Berezin transform may be written
\[
 (B_sf)(z)=\frac{s-1}{4\pi}\int_{\D}\delta(z,w)^s f(w)\,\dd\mu_H(w).
\]
The operator in \eqref{eq:hyperbolic-convolution} is therefore the renormalized endpoint
\[
 UKU^{-1}=\lim_{s\downarrow1}\frac{B_s}{s-1}
\]
on compactly supported functions, and then by the bounded spectral multiplier on $L^2$.  We use the descriptive term \emph{Hardy-endpoint Berezin convolution}; there does not appear to be a universally adopted name for this endpoint.  This distinguishes it from the Bergman projection, which is an idempotent holomorphic projection with spectrum $\{0,1\}$, whereas a Berezin transform is an averaging operator and is not a projection \cite{Stroethoff1997,Englis1995}.

The endpoint kernel
\[
 h(d)=\frac1{4\pi}\sech^2(d/2)
\]
is not in $L^1(\mathbb H^2)$: $2\pi h(d)\sinh d\to1$ as $d\to\infty$.  It is in $L^2$.  Thus boundedness should not be justified by an $L^1$ Young inequality.  Proposition~\ref{prop:norm} establishes boundedness independently, and the spherical transform below identifies the bounded multiplier.

\section{Hyperbolic spectral multiplier and operator spectrum}
\label{sec:spectrum}
By the Helgason--Fourier Plancherel theorem for the hyperbolic plane
\cite[Introduction, \S4]{Helgason1984}, the operator $-\Delta_H$ is
unitarily equivalent\footnote{Helgason uses the disk metric
	$
	ds_0^2=\frac{|\dd z|^2}{(1-|z|^2)^2},
	$
	which has curvature $-4$.  Our curvature $-1$ convention is
	$ds_H^2=4ds_0^2$, and hence
	$
	\Delta_H=\frac14\Delta_0$
	$d_H=2d_0.
	$
	Thus the spectral threshold $1$ in Helgason's normalization becomes
	$1/4$ in ours.  Equivalently, his spectral parameter $\lambda$ is
	related to ours by $\lambda=2\mu$.} to multiplication by
\[
\mu^2+\frac14,\qquad \mu\geq0.
\]
Consequently,
\[
\operatorname{spec}(-\Delta_H)=[1/4,\infty),
\]
and the spectrum is purely absolutely continuous.  For $\mu\geq0$, let
\[
\phi_\mu(d)
=
P_{-1/2+i\mu}(\cosh d),
\qquad
\Delta_H\phi_\mu
=
-\left(\mu^2+\frac14\right)\phi_\mu,
\qquad
\phi_\mu(0)=1.
\]
This is the normalized radial Laplace eigenfunction, traditionally
called the zonal spherical function.  The transform of a radial
kernel against $\phi_\mu$ is called the spherical transform; on
$\mathbb H^2$ it is the order-zero Mehler--Fock transform, or
equivalently the radial part of the Helgason--Fourier transform.
 Write
\begin{equation}
 \mathsf A=-\mathsf L-\frac14I\geq0.
 \label{eq:A}
\end{equation}

\begin{lemma}[Mehler--Fock transform of the endpoint kernel]
    \label{lem:mehler-fock}
	For $\mu\geq0$,
	\begin{equation}
		\int_1^\infty
		\frac{P_{-1/2+i\mu}(u)}{1+u}\,\dd u
		=
		\frac{\pi}{\cosh(\pi\mu)}.
		\label{eq:mehler-fock-integral}
	\end{equation}
	Consequently, the hyperbolic convolution kernel
	\[
	h(d)=\frac1{4\pi}\sech^2(d/2)
	\]
	has spectral multiplier
	\[
	\widehat h(\mu)=\frac{\pi}{\cosh(\pi\mu)}.
	\]
\end{lemma}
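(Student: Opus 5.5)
Two things need proving: the integral identity, and the fact that this integral really is the spherical transform of $h$.

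\emph{Reduction to the integral.} The spherical transform of a radial kernel on the curvature $-1$ hyperbolic plane is
\[
\widehat h(\mu)=2\pi\int_0^\infty h(d)\,\phi_\mu(d)\sinh d\,\dd d .
\]
Set $u=\cosh d$, so $\sinh d\,\dd d=\dd u$. The half-angle identity gives $\sech^2(d/2)=2/(1+u)$. Hence
\[
\widehat h(\mu)=2\pi\cdot\frac{1}{4\pi}\int_1^\infty\frac{2\,P_{-1/2+i\mu}(u)}{1+u}\,\dd u=\int_1^\infty\frac{P_{-1/2+i\mu}(u)}{1+u}\,\dd u ,
\]
so the second assertion follows from \eqref{eq:mehler-fock-integral}.

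Convergence is only conditional. For $\mu>0$ we have $P_{-1/2+i\mu}(u)=O(u^{-1/2})$ times an oscillating factor $\cos(\mu\log u+c)$, so the integrand is $O(u^{-3/2})$ and the integral converges absolutely. At $\mu=0$ it is $O(u^{-3/2}\log u)$. So the integral is absolutely convergent for every $\mu\geq0$. This matters because $h\notin L^1$: the multiplier has to be identified through $L^2$ theory. I would do that by approximating $h$ with the truncated kernels $h\mathbf 1_{d<R}$ and passing to the limit in the Plancherel/Helgason framework. Alternatively, use the limit $\lim_{s\downarrow1}B_s/(s-1)$ recorded in the previous section together with monotone convergence of the multipliers.

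\emph{The integral identity.} I would insert the Laplace-type representation
\[
P_{-1/2+i\mu}(\cosh d)=\frac{\sqrt2}{\pi}\int_0^d\frac{\cos(\mu t)}{\sqrt{\cosh d-\cosh t}}\,\dd t
\]
(Mehler's integral) and apply Fubini. The inner integral becomes
\[
\int_{\cosh t}^\infty\frac{\dd u}{(1+u)\sqrt{u-\cosh t}}=\frac{\pi}{\sqrt{1+\cosh t}}=\frac{\pi}{\sqrt2\,\cosh(t/2)} .
\]
Collecting constants gives
\[
\int_1^\infty\frac{P_{-1/2+i\mu}(u)}{1+u}\,\dd u=\int_0^\infty\frac{\cos(\mu t)}{\cosh(t/2)}\,\dd t .
\]
The classical Fourier transform of $\sech$ is $\int_0^\infty\cos(\mu t)\sech(t/2)\,\dd t=\pi\sech(\pi\mu)$, which is exactly \eqref{eq:mehler-fock-integral}.

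\emph{Main obstacle.} The Fubini interchange is only conditionally justified, since $\int_0^\infty\sech(t/2)\,\dd t$ is finite but the $\cos$ factor is not used to get absolute convergence. In fact absolute convergence does hold: bounding $|\cos|\le1$, the double integral equals the $\mu=0$ quantity $\int_0^\infty\sech(t/2)\,\dd t=\pi$. So Tonelli applies directly, and the main remaining care is the $L^2$ identification of the multiplier for the non-$L^1$ kernel. A fallback for the integral is the tabulated Mehler--Fock pair: $(1+u)^{-1}$ has transform $\pi\sech(\pi\mu)$ (Gradshteyn--Ryzhik 7.213), cross-checked at $\mu=0$.
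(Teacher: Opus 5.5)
Your proof is correct. The reduction to the integral, via geodesic polar coordinates and $\sech^2(d/2)=2/(1+u)$, is the same as the paper's; the difference is in how you evaluate \eqref{eq:mehler-fock-integral}.

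The paper stays entirely within hypergeometric calculus. It uses DLMF 14.3.6, the substitution $t=(u-1)/(u+1)$, Pfaff's transformation, and termwise beta integrals (justified by terms of size $O(n^{-3/2})$). It then finishes with Gauss's summation and Euler's reflection.

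You instead use the Mehler--Dirichlet representation, which factors the spherical transform as an Abel transform followed by a Euclidean cosine transform. Concretely, $\frac{\sqrt2}{\pi}\int_{\cosh t}^\infty\frac{\dd u}{(1+u)\sqrt{u-\cosh t}}=\sech(t/2)$, and the classical pair $\int_0^\infty\cos(\mu t)\sech(t/2)\,\dd t=\pi\sech(\pi\mu)$ finishes. I checked the constants and the inner integral, and they are right.

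What your route buys:
\begin{itemize}
\item Every interchange is controlled by the nonnegative majorant $\sech(t/2)$, whose integral is $\pi$. Tonelli therefore replaces the series manipulations, and you see directly that $|\widehat h(\mu)|\le\widehat h(0)=\pi$, consistent with $\norm K=\pi$.
\item The same majorant gives $|\widehat{h\mathbf{1}_{\{d<R\}}}(\mu)|\le\pi$ uniformly in $R$ and $\mu$. This is exactly the dominated-convergence input your truncation argument needs to identify the multiplier of the non-integrable kernel on $L^2$. The paper's proof of the lemma does not spell this out; it relies on the remarks at the end of Section~\ref{sec:hyperbolic}.
\end{itemize}

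The paper's route generalizes immediately to $(1+u)^{-\alpha}$ and yields the gamma-function multipliers of the subsequent Remark. Yours does as well: the Abel transform of $(1+u)^{-\alpha}$ is a multiple of $\cosh^{1-2\alpha}(t/2)$, whose cosine transform is the same gamma product.

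Two wording slips need cleaning up:
\begin{itemize}
\item ``Convergence is only conditional'' contradicts your correct $O(u^{-3/2})$ absolute-convergence bound two sentences later.
\item The Fubini step is not ``only conditionally justified''; as you then observe yourself, Tonelli applies directly.
\end{itemize}
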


\begin{proof}
	Put
	\[
	\nu=-\frac12+i\mu,
	\qquad
	a=-\nu=\frac12-i\mu.
	\]
	Thus $\nu+1=1-a$.  By the hypergeometric representation of
	the Legendre function
	\cite[\href{https://dlmf.nist.gov/14.3.E6}{DLMF 14.3.6}]
	{NIST:DLMF},
	\[
	P_{-1/2+i\mu}(u)
	=
	{}_2F_1\left(
	a,1-a;1;\frac{1-u}{2}
	\right),
	\qquad u>1.
	\]
	
	Make the substitution
	\[
	t=\frac{u-1}{u+1},
	\qquad
	u=\frac{1+t}{1-t}.
	\]
	Then
	\[
	\frac{\dd u}{1+u}=\frac{\dd t}{1-t},
	\qquad
	\frac{1-u}{2}=-\frac{t}{1-t}.
	\]
	Pfaff's transformation
	\cite[\href{https://dlmf.nist.gov/15.8.E1}{DLMF 15.8.1}]
	{NIST:DLMF}
	gives
	\[
	{}_2F_1\left(
	a,1-a;1;-\frac{t}{1-t}
	\right)
	=
	(1-t)^a{}_2F_1(a,a;1;t).
	\]
	Therefore
	\[
	\int_1^\infty
	\frac{P_{-1/2+i\mu}(u)}{1+u}\,\dd u
	=
	\int_0^1
	(1-t)^{a-1}{}_2F_1(a,a;1;t)\,\dd t.
	\]
	
	Termwise integration of the hypergeometric series is justified by absolute convergence of the resulting beta series (its terms are $O(n^{-3/2})$), and yields
	\[
	\begin{aligned}
		\int_0^1
		(1-t)^{a-1}{}_2F_1(a,a;1;t)\,\dd t
		&=
		\frac1a\,{}_2F_1(a,a;a+1;1).
	\end{aligned}
	\]
	Gauss' summation formula
	\cite[NIST:DLMF 15.4.20]{NIST:DLMF}
	then gives
	\[
	\frac1a\,{}_2F_1(a,a;a+1;1)
	=
	\Gamma(a)\Gamma(1-a).
	\]
	Finally, Euler's reflection formula gives
	\[
	\Gamma(a)\Gamma(1-a)
	=
	\frac{\pi}{\sin(\pi a)}
	=
	\frac{\pi}{\cosh(\pi\mu)}.
	\]
	
	For the radial kernel $h$, geodesic polar coordinates give
	\[
	\begin{aligned}
		\widehat h(\mu)
		&=
		2\pi\int_0^\infty
		\frac1{4\pi}\sech^2(d/2)
		P_{-1/2+i\mu}(\cosh d)\sinh d\,\dd d \\
		&=
		\int_1^\infty
		\frac{P_{-1/2+i\mu}(u)}{1+u}\,\dd u,
	\end{aligned}
	\]
	where $u=\cosh d$.  This proves the multiplier formula.
\end{proof}

\begin{theorem}[Exact spectral formula]
\label{thm:spectral-formula}
On $L^2(\D,\dd A)$,
\begin{equation}
 K=\pi\sech\!\left(\pi\sqrt{-\mathsf L-\tfrac14I}\right).
 \label{eq:exact-spectral-formula}
\end{equation}
In particular,
\begin{equation}
 \spec(K)=[0,\pi],
 \label{eq:spectrum-K}
\end{equation}
$K$ has purely absolutely continuous spectrum, is injective, and has dense non-closed range.  Neither $0$ nor $\pi$ is an $L^2$ eigenvalue.
\end{theorem}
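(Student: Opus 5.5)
The plan is to transfer everything to the hyperbolic side through the unitary $U$ of \eqref{eq:U} and read off all spectral statements from the Helgason--Fourier Plancherel theorem. By Theorem~\ref{thm:hyperbolic-conjugation}, $UKU^{-1}$ is convolution with the radial kernel $h(d)=\frac1{4\pi}\sech^2(d/2)$, and $U\mathsf LU^{-1}=\Delta_H$. Write $\mathcal F$ for the Helgason--Fourier transform, which diagonalizes $-\Delta_H$ as multiplication by $\mu^2+\frac14$. The key step is to show that $\mathcal F(UKU^{-1})\mathcal F^{-1}$ is multiplication by the spherical transform $\widehat h(\mu)$, which by Lemma~\ref{lem:mehler-fock} equals $\pi/\cosh(\pi\mu)$. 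Once this is established, the functional calculus gives $UKU^{-1}=\pi\sech(\pi\sqrt{-\Delta_H-\frac14})$, and conjugating back by $U$ yields \eqref{eq:exact-spectral-formula}, since $U^{-1}\varphi(-\Delta_H)U=\varphi(-\mathsf L)$ for bounded Borel $\varphi$.

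The main obstacle is the convolution theorem itself, because $h\notin L^1(\mathbb H^2)$, so the standard statement for $L^1$ radial kernels does not apply directly. I would handle this by truncation. Let $h_R=h\,\mathbf 1_{\{d\le R\}}$. This kernel lies in $L^1$, so convolution by $h_R$ equals $\widehat{h_R}(\mu)$ under $\mathcal F$. Here $\widehat{h_R}(\mu)=\int_1^{\cosh R}P_{-1/2+i\mu}(u)(1+u)^{-1}\dd u$.

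The integral in Lemma~\ref{lem:mehler-fock} converges only conditionally, since $P_{-1/2+i\mu}(u)=O(u^{-1/2})$ with oscillation. For $\mu>0$, I would still obtain $\widehat{h_R}(\mu)\to\pi\sech(\pi\mu)$ pointwise as $R\to\infty$. I would also obtain a uniform bound on compact subsets of $(0,\infty)$, using the asymptotic $P_{-1/2+i\mu}(\cosh d)\sim c(\mu)e^{-d/2}\cos(\mu d+\alpha)$ together with integration by parts.

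Next, for $g,f\in C_c^\infty(\D)$, the pairings $\langle h_R*g,f\rangle$ converge to $\langle h*g,f\rangle$ by dominated convergence in physical space. This works because $h(d(z,w))$ is locally bounded and the supports are compact. On the spectral side, I would take $g$ with $\mathcal F g$ supported in $\mu\in[\epsilon,1/\epsilon]$; such $g$ are dense. For these $g$, the bounded convergence of $\widehat{h_R}$ gives convergence to $\int\widehat h\,\mathcal Fg\,\overline{\mathcal Ff}$. Comparing the two limits identifies the operator on a dense set. Since $K$ is bounded by Proposition~\ref{prop:norm} and the multiplier $\widehat h$ is bounded, the identity extends to all of $L^2$. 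As a fallback, I could instead use the limit $UKU^{-1}=\lim_{s\downarrow1}B_s/(s-1)$, since the weighted Berezin kernels are in $L^1$ for $s>1$ and have known multipliers.

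The remaining claims are then routine consequences of the multiplier form.

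\textbf{Spectrum.} The function $\mu\mapsto\pi\sech(\pi\mu)$ is continuous and strictly decreasing on $[0,\infty)$, with range $(0,\pi]$. The spectral measure of $-\Delta_H$ is absolutely continuous with support $[1/4,\infty)$. Hence $\spec(K)$ is the closure of the range, which is $[0,\pi]$, giving \eqref{eq:spectrum-K}.

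\textbf{Absolute continuity.} The spectrum of $K$ is purely absolutely continuous because the image of an absolutely continuous measure under a strictly monotone $C^1$ map with nonvanishing derivative on $(0,\infty)$ is absolutely continuous. The single point $\mu=0$ carries no mass.

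\textbf{Eigenvalues at $\pi$ and $0$.} The value $\pi$ is not an eigenvalue, since $\{\mu=0\}$ is a null set. The value $0$ is not an eigenvalue, since the multiplier never vanishes, which gives injectivity.

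\textbf{Range.} The range of $K$ is dense because $K$ is self-adjoint and injective. The range is not closed: since $0\in\spec(K)$ and $K$ is injective, a closed range would make $K$ boundedly invertible, contradicting $0\in\spec(K)$.
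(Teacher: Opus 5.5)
Your overall route is the paper's: conjugate by $U$, diagonalize the radial convolution with the Helgason--Fourier transform using the multiplier of Lemma~\ref{lem:mehler-fock}, and read off the spectrum, absolute continuity, injectivity and non-closed range from the spectral theorem. Your derivations of those consequences are correct; they are what the paper's final sentence compresses. The paper simply asserts that the spherical transform diagonalizes the convolution, with boundedness supplied by Proposition~\ref{prop:norm}. Your truncation argument tries to justify this despite $h\notin L^1(\mathbb H^2)$, which is worthwhile, but as written it does not close.

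The comparison step fails. You get physical-side convergence of $\ip{h_R*g}{f}$ for $g,f\in C_c^\infty(\D)$, and spectral-side convergence for $g$ with transform supported in $\epsilon\le\mu\le1/\epsilon$. These classes meet only in $\{0\}$. By Helgason's Paley--Wiener theorem, the transform of a compactly supported $g$ is entire in the spectral parameter, so it cannot vanish for $\mu<\epsilon$ unless $g=0$. There is therefore no common $g$ on which to compare the two limits.

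The spectral cutoff came from your premise that the integral in Lemma~\ref{lem:mehler-fock} converges only conditionally, and that premise is false. The Laplace integral $P_\nu(\cosh d)=\frac1\pi\int_0^\pi(\cosh d+\sinh d\cos\theta)^\nu\dd\theta$ gives $|P_{-1/2+i\mu}(u)|\le P_{-1/2}(u)=O(u^{-1/2}\log u)$. So the integrand is $O(u^{-3/2}\log u)$ and the integral converges absolutely for every real $\mu$.

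The repair is short, in three steps.
\begin{enumerate}
\item Since $0\le h_R\le h$ and $\iint h(d_H(z,w))\,|g(w)|\,|f(z)|\dd\mu_H(w)\dd\mu_H(z)\le\pi\norm{g}\norm{f}$ by the Schur estimate of Proposition~\ref{prop:norm}, dominated convergence gives the physical-side limit for all $g,f\in L^2$.
\item Since $|\widehat{h_R}(\mu)|\le\int_1^\infty P_{-1/2}(u)(1+u)^{-1}\dd u=\pi$ (the Lemma at $\mu=0$), dominated convergence also gives the spectral-side limit for all $g,f\in L^2$.
\item Hence $\ip{UKU^{-1}g}{f}=\int\widehat h\,\mathcal Fg\,\overline{\mathcal Ff}$ holds for all $g,f$ directly, with no density argument and no integration by parts.
\end{enumerate}
The failure of $h\in L^1$ is harmless because only $\int h\,\phi_0\,\dd\mu_H=\pi$ needs to be finite.
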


\begin{proof}
The radial convolution in Theorem~\ref{thm:hyperbolic-conjugation} is diagonalized by the spherical/Helgason Fourier transform.  Lemma~\ref{lem:mehler-fock} gives the bounded multiplier $\pi\sech(\pi\mu)$, yielding \eqref{eq:exact-spectral-formula}.  The multiplier is continuous, strictly positive for finite $\mu$, decreases from $\pi$ to zero, and the hyperbolic Laplacian has purely absolutely continuous spectrum.  The stated consequences follow from the spectral theorem.
\end{proof}

\begin{remark}
	The same calculation gives, for $\Re\alpha>1/2$,
	\[
	\int_1^\infty
	\frac{P_{-1/2+i\mu}(u)}{(1+u)^\alpha}\,\dd u
	=
	2^{1-\alpha}
	\frac{
		\Gamma(\alpha-\frac12+i\mu)
		\Gamma(\alpha-\frac12-i\mu)
	}{\Gamma(\alpha)^2}.
	\]
	The case $\alpha=1$ is the endpoint kernel used above.  This
	general identity also displays directly the gamma-function multipliers
	of the weighted Berezin family.
\end{remark}

The formula also supplies the canonical closed realization of the commutator.  Let
\begin{equation}
 J=-i\partial_\theta.
 \label{eq:J}
\end{equation}
Rotations are hyperbolic isometries, so $\mathsf A$ and $J$ strongly commute.  Define
\begin{equation}
 \mathsf P=\frac34I-\mathsf A-\frac12J^2,
 \qquad
 \Dom(\mathsf P)=\Dom(\mathsf A)\cap\Dom(J^2).
 \label{eq:P-operator}
\end{equation}

\begin{corollary}[Strong commutation]
\label{cor:strong-commutation}
The operator $\mathsf P$ is self-adjoint, agrees with \eqref{eq:P-t} on $C_c^\infty(\D)$, and
\begin{equation}
 K\Dom(\mathsf P)\subseteq\Dom(\mathsf P),
 \qquad
 \mathsf P Kf=K\mathsf P f
 \quad(f\in\Dom(\mathsf P)).
 \label{eq:strong-commutation}
\end{equation}
Indeed, $K$ strongly commutes with the spectral projections of $\mathsf P$.
\end{corollary}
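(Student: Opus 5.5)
The plan is to work entirely in the joint functional calculus of the two strongly commuting self-adjoint operators $\mathsf A$ and $J$. Conjugating by $U$ moves everything to $L^2(\D,\dd\mu_H)$. There $U\mathsf AU^{-1}=-\Delta_H-\frac14$, and $UJU^{-1}=J$ because $U$ is multiplication by a radial function. Rotations are isometries of the Poincar\'e metric, so they commute with $\Delta_H$, and $J$ is the generator of the rotation group. Stone's theorem then shows that the unitary groups $e^{isJ}$ and $e^{it\mathsf A}$ commute, which is the strong commutation already asserted before the corollary. By the spectral theorem for two strongly commuting self-adjoint operators, there is a joint projection-valued measure $E$ on $[0,\infty)\times\Z$. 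Under $E$, $\mathsf A$ is the coordinate $a$ and $J$ is the coordinate $m$, so $\mathsf P$ becomes the real function
\[
p(a,m)=\tfrac34-a-\tfrac12m^2
\]
of the joint spectrum. A real measurable function of a joint spectral measure is self-adjoint on the natural domain $\{f:\int|p|^2\,\dd\langle E f,f\rangle<\infty\}$.

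The first step is to check that this natural domain equals $\Dom(\mathsf A)\cap\Dom(J^2)$. On the joint spectrum we have $a\ge0$ and $m^2\ge0$, so both terms of $p$ have the same sign and
\[
|a+\tfrac12m^2|^2\ \ge\ a^2+\tfrac14m^4 .
\]
Hence finiteness of $\int|p|^2$ is equivalent to finiteness of both $\int a^2$ and $\int m^4$. This gives self-adjointness of $\mathsf P$ on exactly the stated domain.

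The second step is to show that $\mathsf P$ agrees with \eqref{eq:P-t} on $C_c^\infty(\D)$. For $f\in C_c^\infty(\D)$, the function $Uf$ is smooth with compact support, so it lies in the domain of the self-adjoint (Friedrichs, and essentially self-adjoint) Laplacian on the complete manifold $\mathbb H^2$, and $\mathsf Lf$ is given by the differential expression \eqref{eq:L-euclidean}. Also $f\in\Dom(J^2)$ with $J^2f=-\partial_\theta^2 f$. Substituting these,
\[
\mathsf Pf=\tfrac34 f+\mathsf Lf+\tfrac14 f+\tfrac12\partial_\theta^2 f=\mathcal Lf+\tfrac12\partial_\theta^2 f+f ,
\]
and this equals $\mathcal Pf$ by \eqref{eq:L-def}. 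Formal self-adjointness of $\mathcal P$, noted after \eqref{eq:P-r}, is consistent with this identification.

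The third step, on commutation, is the main point. Theorem~\ref{thm:spectral-formula} gives $K=\pi\sech(\pi\mathsf A^{1/2})$, a bounded Borel function of $\mathsf A$, and therefore a bounded function of the joint spectral measure $E$. Every bounded Borel function of $E$ commutes with every spectral projection $E(\Omega)$. Since $\mathsf P$ is itself a function of $E$, its spectral projections are of the form $E(p^{-1}(B))$, so $K$ strongly commutes with them. The standard functional-calculus consequence is that for $f\in\Dom(\mathsf P)$,
\[
\int|p|^2\,\dd\langle EKf,Kf\rangle=\int|p|^2\,\pi^2\sech^2(\pi\sqrt a)\,\dd\langle Ef,f\rangle<\infty ,
\]
so $Kf\in\Dom(\mathsf P)$ and $\mathsf PKf=K\mathsf Pf$. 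The step I expect to need most care is justifying that $K$ is a function of $\mathsf A$ alone, and thus of $E$; everything else follows from the calculus. I would make this precise by observing that the Helgason--Fourier diagonalization used in Theorem~\ref{thm:spectral-formula} is exactly the spectral representation of $-\Delta_H$. Consistency with Proposition~\ref{prop:kernel-commutator} on $C_c^\infty$ gives an independent check.
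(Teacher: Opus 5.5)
Your proposal is correct and follows the paper's argument: in the joint spectral representation of $(\mathsf A,J)$, $K$ and $\mathsf P$ become multiplication by $\pi\sech(\pi\sqrt a)$ and $\tfrac34-a-\tfrac12m^2$, and the bounded multiplier preserves the maximal domain and commutes pointwise with the unbounded one. Two of your steps fill in details the paper leaves implicit: the same-sign estimate identifying that maximal domain with $\Dom(\mathsf A)\cap\Dom(J^2)$, and the explicit check on $C_c^\infty(\D)$ via \eqref{eq:L-def} and \eqref{eq:L-euclidean}.
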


\begin{proof}
In the joint spectral representation of $(\mathsf A,J)$, the operators $K$ and $\mathsf P$ are multiplication by
\[
 \frac{\pi}{\cosh(\pi\mu)}
 \quad\hbox{and}\quad
 \frac34-\mu^2-\frac{m^2}{2},
\]
respectively.  The first multiplier is bounded, so it preserves the domain defined by square integrability against the second multiplier and commutes pointwise with it.
\end{proof}

\section{Generalized singular functions and resolution}
\label{sec:singular-functions}

Let $x=d_H(0,r)=2\operatorname{arctanh}r$, so $r=\tanh(x/2)$.  Separating angular frequency $m\in\Z$, a regular generalized eigenfunction of the hyperbolic Laplacian is
\begin{align}
 R_{m,\mu}(x)
 &=\tanh(x/2)^{|m|}
 {}_2F_1\!\left(\frac12+i\mu,\frac12-i\mu;
 |m|+1;-\sinh^2(x/2)\right)\notag\\
 &=\Gamma(|m|+1)P_{-1/2+i\mu}^{-|m|}(\cosh x).
 \label{eq:radial-genfun}
\end{align}
The corresponding Euclidean-space generalized function is
\begin{equation}
 \Phi_{m,\mu}(r,\theta)
 =\frac{2}{1-r^2}R_{m,\mu}(x)e^{im\theta}.
 \label{eq:euclidean-genfun}
\end{equation}
Formally,
\begin{equation}
 K\Phi_{m,\mu}
 =\frac{\pi}{\cosh(\pi\mu)}\Phi_{m,\mu}.
 \label{eq:gen-eigen-K}
\end{equation}
These are generalized, not square-integrable, eigenfunctions.

Because $K=\mathcal{M}^*\mathcal{M}$, the generalized singular attenuation of the measurement map is
\begin{equation}
 \sigma(\mu)=\left(\frac{\pi}{\cosh(\pi\mu)}\right)^{1/2}
 \sim\sqrt{2\pi}\,e^{-\pi\mu/2}
 \qquad(\mu\to\infty).
 \label{eq:singular-attenuation}
\end{equation}
Thus an $L^2$ perturbation component at hyperbolic frequency $\mu$ is exponentially suppressed in the data.  In a deterministic noise model $d^\delta=\mathcal{M} f+e$, $\norm e\leq\delta$, inversion of that component amplifies noise by approximately $\sigma(\mu)^{-1}$.  This is the appropriate continuous-spectrum analogue of exponentially decaying singular values.

The stable Liouville profile
\begin{equation}
 v_{m,\mu}(x)=\sqrt{\sinh x}\,R_{m,\mu}(x)
 \label{eq:liouville-profile}
\end{equation}
satisfies
\begin{equation}
 -v''+\left(m^2-\frac14\right)\csch^2x\,v=\mu^2v.
 \label{eq:poschl-teller}
\end{equation}
The potential is short range as $x\to\infty$, so $v$ is asymptotically oscillatory with wavelength $\pi/\mu$ between successive zeros.  In Euclidean radius, equal increments of $x$ are compressed exponentially towards $r=1$.  This gives a direct geometric explanation of the concentration of fine resolution near the boundary.

The exponential multiplier should not be confused with compactness.  It is a nonzero multiplication function on a continuous spectral space; multiplication by such a function is not compact.  Proposition~\ref{prop:noncompact} gives an elementary manifestation of the same fact.

\section{Numerical study with truncated radius}
\label{sec:numerics}

For illustration, we truncate the hyperbolic radial variable to
$0<x<X$.  For each angular frequency $|m|\leq M$, a Gauss--Legendre
quadrature rule is used to form a symmetric Nystr\"om matrix for the
corresponding truncated integral operator.  If $x_i$ and $w_i$ are
the quadrature nodes and weights and $\widetilde k_m$ denotes the
kernel in the transformed coordinate, the matrix has the form
\[
(A_m)_{ij}
=
\sqrt{w_i}\,\widetilde k_m(x_i,x_j)\sqrt{w_j}.
\]
The leading eigenpairs of the real symmetric matrix $A_m$ are computed
using MATLAB's \texttt{eigs} routine.  If
$\lambda_{m,j}^{(X,N)}$ is a computed eigenvalue, then
\[
\sigma_{m,j}^{(X,N)}
=
\sqrt{\lambda_{m,j}^{(X,N)}}
\]
is the corresponding singular value of the truncated measurement
map.

The computed eigenvalues and eigenvectors are compared with the
continuous spectral formula in two independent ways.  First, an
effective spectral parameter is obtained from
\[
\mu_\lambda
=
\frac1\pi
\operatorname{arcosh}\!\left(\frac{\pi}{\lambda_{m,j}^{(X,N)}}\right),
\]
which inverts the analytic multiplier
$\lambda(\mu)=\pi/\cosh(\pi\mu)$.  Second, when sufficiently many
radial zeros are present, $\mu$ is estimated from their asymptotic
spacing $\Delta x\sim\pi/\mu$.  After the Liouville rescaling
\eqref{eq:liouville-profile}, the numerical eigenvectors are compared
directly with the associated Legendre/hypergeometric generalized
eigenfunctions.

The truncation makes every block compact; its discrete modes should be interpreted as spectral packets sampling the continuous parameter $\mu$, not as convergent eigenvalues of the infinite operator.

Figure~\ref{fig:global-order} shows the characteristic interlacing of angular frequency and radial oscillation count.  The sawtooth behaviour is expected: after ordering by attenuation, the next mode need not have either the next angular frequency or the next radial zero count.  Figure~\ref{fig:analytic-comparison} compares selected numerical profiles with \eqref{eq:radial-genfun} after the stable transformation \eqref{eq:liouville-profile}.

\begin{figure}[htbp]
 \centering
 \includegraphics[width=1.0\textwidth]{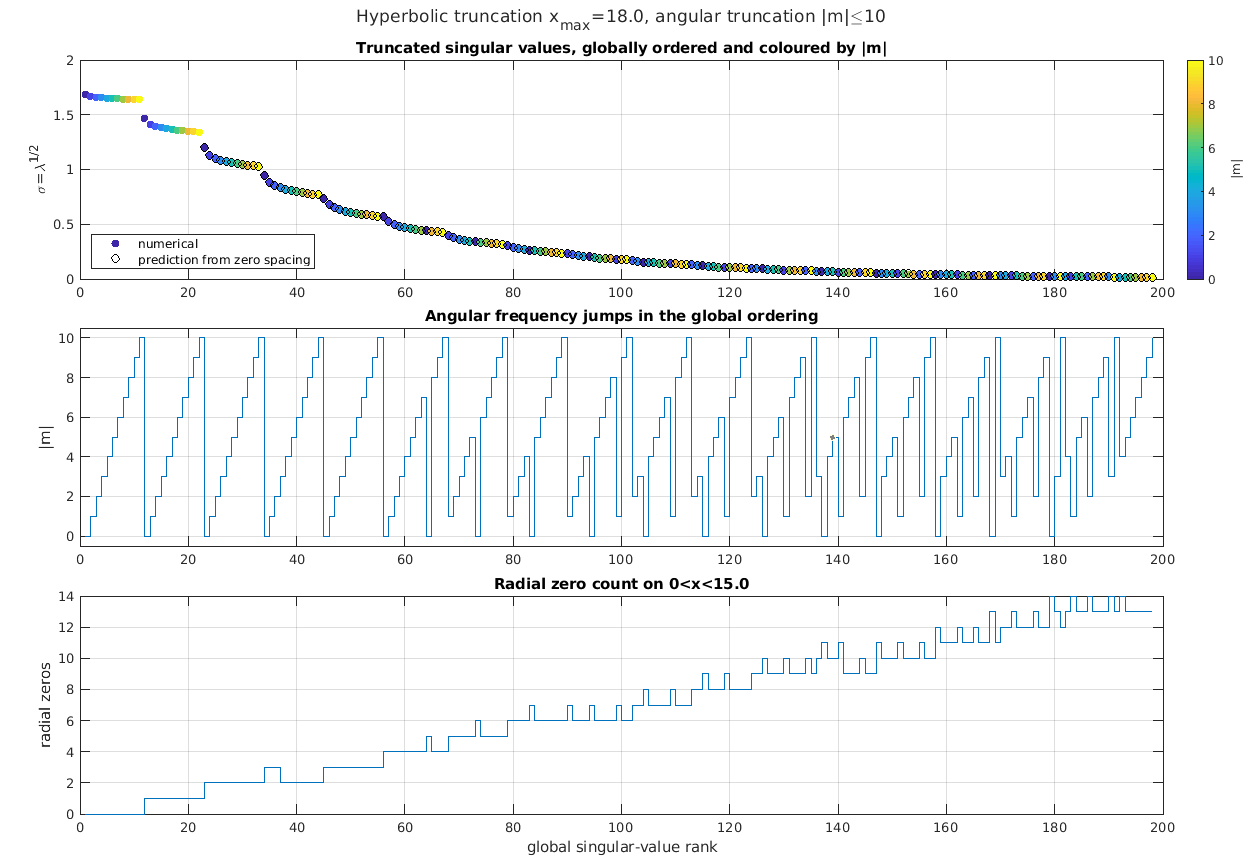}
 \caption{Finite-section global ordering.  The computed singular values $\sigma=\sqrt\lambda$ are compared with the square root of the analytic normal-operator multiplier, while the lower panels display the angular frequency and radial-zero count of the correspondingly ordered mode.  The discrete ordering depends on the hyperbolic truncation, but the mixed angular--radial pattern is robust.}
 \label{fig:global-order}
\end{figure}

\begin{figure}[htbp]
 \centering
 \includegraphics[width=1.0\textwidth]{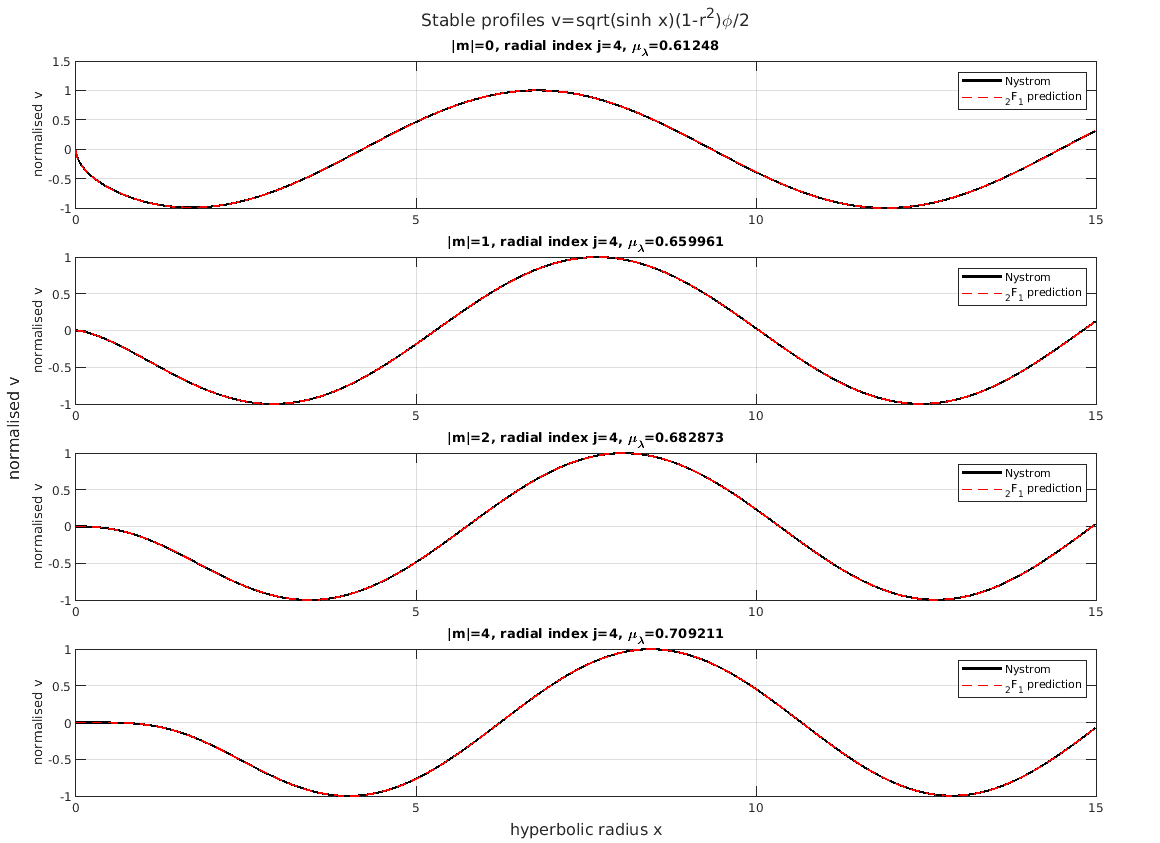}
 \caption{Comparison of finite-section numerical modes and associated Legendre/hypergeometric profiles in the Liouville variable.  The agreement away from the artificial boundary is a stronger check than singular-value matching alone.}
 \label{fig:analytic-comparison}
\end{figure}
\FloatBarrier

Although the spectral decomposition is continuous, finite-dimensional
discretizations produce ordinary singular vectors.  These should be
interpreted as spectral packets associated with the generalized
singular functions $\Phi_{m,\mu}$, rather than as approximations to
isolated eigenfunctions of the infinite normal operator.

Figure~\ref{fig:nonradialsingularfunctions} displays selected finite-section modes
ordered by their computed attenuation.  The ordering interlaces angular
frequency and radial oscillation count: increasing the global mode
number need not increase either quantity separately.  This behaviour,
seen in earlier numerical singular value decompositions of EIT
operators, follows naturally from the joint spectral parameters
$(m,\mu)$.

In the hyperbolic radial coordinate
\[
x=2\operatorname{arctanh}r,
\]
the transformed radial functions are asymptotically oscillatory with
approximately constant wavelength $\pi/\mu$.  The compression of equal
$x$-intervals as $r\to1$ explains the apparent concentration of radial
resolution near the Euclidean boundary.

\begin{figure}
	\centering
	\includegraphics[width=1.0\linewidth]{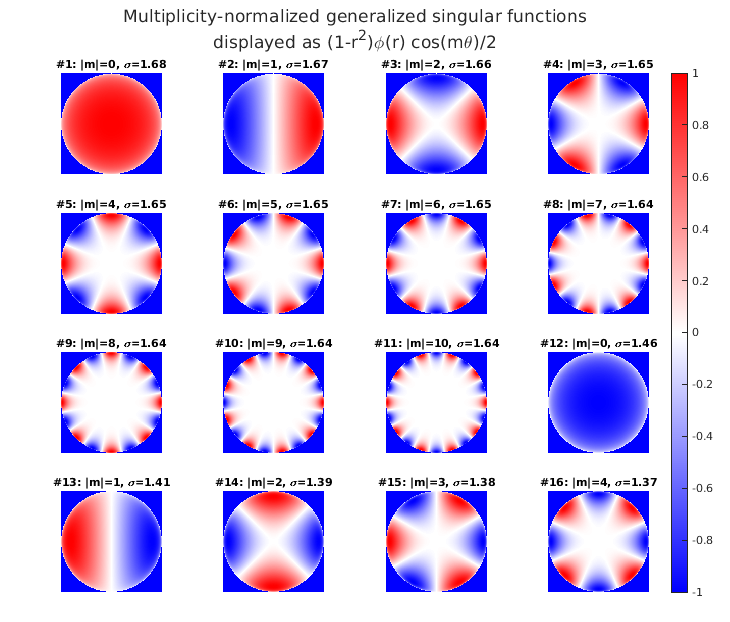}
	\caption{
        Selected finite-section approximations to the generalized right
        singular functions of the multiplicity-normalized linearized
        measurement map.  Each panel is labelled by angular frequency
        $|m|$, finite-section radial index $j$, radial zero count $z$, and
        computed singular value $\sigma$.  The modes are ordered globally by
        the computed normal-operator eigenvalue.  The resulting sequence
        interlaces angular and radial complexity.  The increasing concentration
        of radial oscillations near $r=1$ is the Euclidean manifestation of
        approximately uniform oscillation in hyperbolic radius.  For $|m|>0$,
        a second real mode is obtained by replacing the cosine angular factor
        by a sine factor.
}
	\label{fig:nonradialsingularfunctions}
\end{figure}

\section{Discussion}
\label{sec:discussion}

The result supplies a global counterpart of the rotationally symmetric calculation.  The one-dimensional Hilbert-kernel operator is the $m=0$ angular block of a Hardy-endpoint Berezin convolution on the Poincar\'e disk.  The separate Sturm--Liouville expressions are the angular reductions of one disk differential operator.  This explains why the same multiplier appears through finite Hilbert transforms, Hilbert matrices, scattering theory and hyperbolic harmonic analysis.

The relation with disk Radon transforms is structural rather than literal.  In Radon and geodesic X-ray problems, Zernike or generalized Zernike functions diagonalize distinguished disk operators, and differential intertwining identities yield sharp mapping results \cite{Monard2020,MishraMonard2021,MishraMonardZou2023,EptaminitakisMonardZou2026}.  Here the same Zernike lattice arises from the multiplicities of boundary Fourier measurements, but the normal operator is not an X-ray normal operator and does not localize the unknown along hyperbolic geodesics.  The geometry is hyperbolic while the imaging mechanism is an exponentially smoothing invariant convolution.

The abstract diagonalization belongs to established Berezin and symmetric-space harmonic analysis.  The new inverse-problem statement is the route
\begin{equation*}
\begin{gathered}
 \hbox{linearized Calder\'on matrix}
 \longrightarrow \hbox{multiplicity fibres}
 \longrightarrow \hbox{Zernike lattice}\\
 \longrightarrow \frac1{\pi|1-z\overline w|^2}
 \longrightarrow \hbox{Hardy-endpoint Berezin convolution}.
\end{gathered}
\end{equation*}
This route makes the data norm and the exceptional edge $n=|m|$ visible rather than concealing them in a formal block sum.

The exact diagonalization is therefore a statement about a specified local experiment: the derivative at the zero potential, with one equal-weight coordinate for each independent Fourier moment.  This choice is not presented as the unique physical norm for EIT.  It is, however, mathematically nontrivial and falsifiable: it removes rather than enhances the raw multiplicity gain, it exposes a global invariant geometry that is hidden by the unreduced matrix norm, and it leaves the exponential attenuation already present in the uniformly equivalent radial subproblem.  These points allow other data norms---including covariance-weighted electrode norms---to be compared objectively with the present baseline.

Several related questions remain outside the scope of the present paper.  Frequency-weighted Dirichlet-to-Neumann or Neumann-to-Dirichlet  data lead to logarithmic, dilogarithmic or shifted-polylogarithmic kernels; for the dilogarithm kernel a third-order adjoint intertwiner is available, but no explicit singular system is presently known.  If the perturbation is known near the boundary, the radial operator becomes compact and a commuting Heun equation appears.  These are separate spectral problems and are not needed for the present full-data result. The three-dimensional analogue of the present problem may  fall to the same methods as used in this paper, albeit with a more elaborate redundancy weighting.

\ack{This work was developed with substantial assistance from ChatGPT (OpenAI), using GPT-5.6 Sol Pro through a ChatGPT Pro subscription in September 2026; OpenAI provided access free of charge.  The assistance included proposing candidate derivations, symbolic and numerical code, literature-search suggestions, and draft prose.  In particular, the system proposed the hyperbolic/Berezin interpretation.  The author formulated the inverse problem and the multiplicity normalization, derived the normal kernel, selected and developed the final arguments, executed and reviewed the computations, checked the cited identities, and assumes responsibility for all mathematical claims, references and originality statements.

Part of this work was completed while the author was a Visiting Fellow at Clare Hall, Cambridge.}

\roles{William R B Lionheart: conceptualization, methodology, formal analysis, investigation, software, validation, visualization, writing--original draft, and writing--review and editing.}
\data{The MATLAB code used for the exploratory finite-section figures can be found at the repositary \url{https://github.com/billlion/CalderonDiskSVD} }

\bibliographystyle{iopart-num-titles}
\bibliography{references}

\appendix
\section{A direct polar-coordinate check of the kernel identity}
\label{app:polar-check}

Set
\[
 \psi=\theta-\phi,\qquad
 D=1-2r\rho\cos\psi+r^2\rho^2,
 \qquad \kappa=(\pi D)^{-1}.
\]
The operator acting in the $z$ variable is
\[
 \mathcal P_z=
 \frac1{4r}\partial_r\bigl(r(1-r^2)^2\partial_r\bigr)
 +\frac14(r^2+r^{-2})\partial_\psi^2+r^2,
\]
and $\mathcal P_w$ is obtained by replacing $r$ by $\rho$.  Expansion over the common denominator $\pi D^3$ gives a numerator divisible by $r^2-\rho^2$ from the radial terms; the angular and zeroth-order terms cancel the remainder, leaving
\[
 (\mathcal P_z-\mathcal P_w)\kappa=0.
\]
This can be checked in Mathematica as:
\begin{lstlisting}[language=Mathematica,basicstyle=\ttfamily\footnotesize,frame=single,breaklines=true]
ClearAll[r, rho, psi, ker, Pz, Pw];
den = 1 - 2 r rho Cos[psi] + r^2 rho^2;
ker = 1/(Pi den);
Pz[u_] := 1/(4 r) D[r (1-r^2)^2 D[u,r],r]
          + 1/4 (r^2 + 1/r^2) D[u,{psi,2}] + r^2 u;
Pw[u_] := 1/(4 rho) D[rho (1-rho^2)^2 D[u,rho],rho]
          + 1/4 (rho^2 + 1/rho^2) D[u,{psi,2}] + rho^2 u;
FullSimplify[Pz[ker]-Pw[ker],
 Assumptions -> 0<r<1 && 0<rho<1 && Element[psi,Reals]]
\end{lstlisting}
which evaluates to zero. 

%

\end{document}